\documentclass[sn-mathphys,Numbered]{sn-jnl}

\usepackage{graphicx}%
\usepackage{multirow}%
\usepackage{amsmath}
\usepackage{amsthm}%
\usepackage{mathrsfs}%
\usepackage[title]{appendix}%
\usepackage{xcolor}%
\usepackage{textcomp}%
\usepackage{manyfoot}%
\usepackage{booktabs}%
\usepackage{algorithm}%
\usepackage{algorithmicx}%
\usepackage{algpseudocode}%
\usepackage{listings}%
\usepackage{dsfont}
\usepackage{float}
\usepackage{subfigure}
\usepackage{mathrsfs}
\usepackage{eepic}
\usepackage{amsfonts}
\usepackage{verbatim}
\usepackage{ragged2e}
\usepackage[numbers,sort&compress]{natbib}
\usepackage{arydshln}

\usepackage{url}
\usepackage{color}
\usepackage{booktabs}
\usepackage{threeparttable}
\usepackage{amssymb}
\usepackage{bbding}
\usepackage{booktabs}
\usepackage{graphicx}
\usepackage{dsfont}
\usepackage{amsmath}
\usepackage{float}
\usepackage{subfigure}
\usepackage{multirow}
\usepackage{multicol}
\usepackage{lscape}
\usepackage{float}



\newtheorem{theorem}{Theorem}
%
\newtheorem{assumption}{Assumption}
\newtheorem{remark}{Remark}%
\newtheorem{lemma}{Lemma}
\newtheorem{corollary}{Corollary}
\newtheorem{definition}{Definition}%

\raggedbottom

\begin{document}
	
	\title[Stochastic smoothing accelerated gradient method for general constrained nonsmooth  
	convex 
	composite optimization]{Stochastic smoothing accelerated gradient method for general constrained nonsmooth  
		convex 
		composite optimization}
	
	
	\author[1]{\fnm{Ruyu} \sur{Wang}}\email{wangruyu@nwafu.edu.cn}
	
	\author*[1]{\fnm{Chao} \sur{Zhang}}\email{zc.njtu@163.com}
	\equalcont{These authors contributed equally to this work.}

		
	\affil[1]{\orgdiv{School of Mathematics and Statistics}, \orgname{Beijing Jiaotong University}, \orgaddress{\street{No.3 Shangyuancun}, \city{Haidian}, \postcode{100044}, \state{Beijing}, \country{China}}}

	
	\abstract{We propose a novel stochastic smoothing accelerated gradient (SSAG) method for general constrained nonsmooth convex composite optimization, and analyze the convergence rates. The SSAG method allows various smoothing techniques, and can deal with the nonsmooth term that is not easy to compute its proximal term, or that does not own the linear max structure. To the best of our knowledge, it is the first time to develop a stochastic approximation type method that treats the maximization of finite but numerous nonsmooth convex functions as a stochastic function, which significantly improves the computational efficiency. We prove that the SSAG method can simultaneously achieve the best-known order ${\cal{O}}(\frac{1}{\epsilon})$ of iteration complexity, and the optimal order ${\cal{O}}(\frac{1}{\epsilon^2})$ of $\cal{SFO}$ complexity, using variable sample-size. Numerical results on the application arising from the distributionally robust optimization demonstrate the effectiveness and efficiency of the proposed SSAG method.}

	\keywords{Smoothing method, Stochastic approximation, Accelerated gradient method, Constrained convex stochastic programming, Complexity, Distributionally robust optimization}
	
	
	
	\maketitle
	
	\section{Introduction}\label{sec:introduction}
	In this paper, we develop and analyze a novel stochastic smoothing accelerated gradient (SSAG) method for the following general nonsmooth convex composite optimization problems
	\begin{eqnarray}\label{orip}
		\psi^{\rm{opt}}:=\min\limits_{x\in X}\left\lbrace \psi(x): = f(x) + h(x)\right\rbrace, 
	\end{eqnarray}
	where the assumptions on the feasible set and underlying functions are 
	\begin{itemize}
		\item $X$ is a closed convex set in the Euclidean space $\mathbb{R}^{d}$; 
		\item $f: X\rightarrow\mathbb{R}$ is a convex function and its gradient is Lipschitz continuous with  constant $L_f$;
		\item $h: X\rightarrow\mathbb{R}$ is a general continuous nonsmooth convex function that allows a smoothing function satisfying Definition \ref{smoothingdefinition} given in {Sect.} 2.
		\item $\psi(\cdot)$ is well defined and finite valued in $X$, and \eqref{orip} has at least one global minimizer $x^{\rm{opt}}$ and $\psi^{\rm{opt}} = \psi(x^{\rm{opt}})$ as its optimal value.
	\end{itemize}
	Problem (\ref{orip}) encapsulates a broad range of nonsmooth optimization problems. In this paper, we mainly focus on $h$ of the following forms, and show that various existing smoothing techniques can generate smoothing functions that satisfy Definition \ref{smoothingdefinition} and the induced smoothing function of $\psi$ owns a stochastic gradient that satisfies Assumption \ref{assumption 1} in {Sect.} 2 for guaranteeing complexity results of the SSAG method. In fact, the additive composite of the three types of $h$ also allows smoothing functions and Assumption \ref{assumption 1} as well. Here we just omit it for conciseness.
	\begin{itemize}
		\item The nonsmooth term $h$ can be the expectation of nonsmooth closed proper convex functions $\mathbf{H}(\cdot, \xi)$ as
		\begin{eqnarray}\label{orip-2}
			h(x):={\mathbb{E}}_{\xi}
			\left[\mathbf{H}(x, \xi)\right],
		\end{eqnarray}
		where $\xi$ follows a certain discrete or continuous probability distribution on the support set $\Xi$. Here $h$ does not need to have an easily obtainable proximal operator.
		
		\item The nonsmooth term $h$ can be the maximum of finite but numerous convex nonsmooth functions as
		\begin{equation}\label{max}
			h(x):=\max \left\lbrace h_{i}(x),~i\in \mathbb{I}_{q}:=\{1,\ldots,q\}\right\rbrace ,
		\end{equation}
		where $q$ can be a large integer greater than thousands, $h_{i}$ is a convex but possibly nonsmooth function over $X$. It is worth emphasizing that the index 
		$i \in \mathbb{I}_q$ in our SSAG method will be considered as a random variable.
		
		\item {The nonsmooth term $h$ can also be in the form that  combines  
			\eqref{orip-2} and \eqref{max}:
			\begin{equation}\label{Emax}
				h(x):=\mathbb{E}_{\xi}
				\left[ \max\limits_{i\in \mathbb{I}_{q}} \left\lbrace h_{i}(x,\xi)\right\rbrace\right],
			\end{equation}
			where $h_{i}(x,\xi)$ involves both the random vector $\xi$ and the random variable $i$.
		}
	\end{itemize}
	
	The three choices of $h$ and their additive composites make problem \eqref{orip} not narrow, and covers a lot of real applications arising in various problems, including machine learning and distributionally robust optimization (DRO) \cite{chapelle2008optimization,shivaswamy2008relative,li2020fast,crammer2001algorithmic}. 
	Problem \eqref{orip} is challenging mainly for two reasons. First, the nonsmooth term $h$ may be rather complex. For instance, $\mathbf{H}(\cdot,\xi)$ in \eqref{orip-2} has no explicit proximal operator, or the number of $h_{i}(x)$ in \eqref{max} or the number of $h_i(x,\xi)$ in \eqref{Emax} is very large, and $h_{i}(x)$ is highly nonsmooth for some $i\in \mathbb{I}_q$. Second, the objective value and/or the subgradient are hard or expensive to obtain. 
	
	To address the first difficulty mentioned above, smoothing techniques for the objective function and smoothing methods are promising \cite{nesterov2005smooth,chen2012smoothing,chen1996class,zhang2020smoothing,zhang2009smoothing,xu2014smoothing,xu2015smoothing}. The smoothing techniques 
	for deterministic nonsmooth convex programming problems are well developed, including the integral convolution \cite{chen2012smoothing}, the Nesterov's smoothing technique \cite{nesterov2005smooth},  the inf-conv smoothing approximation \cite{beck2012smoothing}, as well as the randomized smoothing (RS) technique \cite{lakshmanan2008decentralized,yousefian2012stochastic, duchi2012randomized}.
	
	To tackle the second difficulty, the stochastic approximation (SA) algorithms are appropriate. Ever since the pioneering work \cite{robbins1951stochastic}, SA algorithms have attracted much attention and well developed \cite{polyak1990new,polyak1992acceleration,nemirovski2009robust,lan2012optimal}.  Complexity results are important for SA algorithms in terms of either the number of ${\cal SFO}$ or the number of iterations to get an $\epsilon$-approximate solution. Recall that we say $y_k \in X$ is an $\epsilon$-approximate solution of \eqref{orip}, if
	\begin{eqnarray}\label{epsilon-approximate}
		\mathbb{E}\left[\psi(y_k)-\psi^{\rm opt}\right]\leq\epsilon,
	\end{eqnarray}
	where the expectation is taken to ${\cal F}_k$, the history of randomness up to the $k$-th {iteration}. For the nonsmooth stochastic convex optimization, the order of $\mathcal{SFO}$ complexity required to find an $\epsilon$-approximate solution for a predetermined accuracy $\epsilon>0$, cannot be smaller than $\mathcal{O} \left(\frac{1}{\epsilon^2}\right)$ for a first-order method,
	as pointed out in \cite{nemirovski2009robust}. Moreover, for first-order SA algorithms to solve the nonsmooth stochastic convex optimization, the best-known order of iteration complexity for finding an $\epsilon$-approximate solution is $\mathcal{O} \left(\frac{1}{\epsilon}\right)$ obtained by the smoothed variable sample-size accelerated proximal scheme (sVS-APM); see Theorem 4 of \cite{jalilzadeh2022smoothed} for detail. At the $k$-th {iteration}, the corresponding variable sample size is $\lfloor  k^{1+\delta}\rfloor$, where  $\delta$ can be an arbitrary positive real number, and the order of ${\cal SFO}$ complexity is ${\cal{O}}(\frac{1}{\epsilon^{2+\delta}})$ that can be arbitrarily near optimal. 
	
	Several SA algorithms such as RSPG \cite{ghadimi2016mini} and AM-SGD \cite{zhou2020amortized} can solve nonsmooth stochastic convex composite problems \eqref{orip}-\eqref{orip-2}, but they require that the nonsmooth
	term $h$ is relatively simple, e.g., $h(x) = \|x\|_1$, so that the proximal operator can be obtained easily. Recently, variance reduction techniques have been enrolled in SA algorithms. The popular stochastic variance reduction methods for nonsmooth problems such as SAGA \cite{defazio2014saga}, SVRG \cite{reddi2016stochastic}, SARAH \cite{nguyen2017sarah}, IPSG \cite{wang2017inexact}, and Katyusha \cite{allen2017katyusha} also require the proximal operator to be easily computed.

	For solving (\ref{orip})-(\ref{orip-2}) that the proximal operator can not be obtained easily, there exist several excellent works on combining smoothing techniques and SA methods. The RS method proposed by Duchi et al. in \cite{duchi2012randomized} is one of the pioneering works, which combines the RS technique, with Tseng's accelerated gradient method \cite{tseng2008accelerated} for deterministic smooth optimization problems, and achieves the best-known ${\cal SFO}$ complexity results. In the RS method, the smoothing parameter is flexible, and can either be diminishing or fixed. However, the smoothing techniques used by the RS method are dimension-dependent, and the complexity bounds in ${\cal{SFO}}$ ({Theorems 2.1, 2.2, and Corollaries 2.3-2.5} of \cite{duchi2012randomized}) are dimension-dependent, which may cause the RS method time-consuming for high-dimensional problems. Wang et al. recently proposed a stochastic Nesterov's smoothing accelerated (SNSA) method in \cite{wang2022stochastic} that can achieve the best-known order of $\cal{SFO}$ complexity. The Nesterov's smoothing technique in \cite{nesterov2005smooth} is employed, which requires that for each fixed $\xi\in \Xi$, ${\mathbf{H}}(\cdot,\xi)$ has the linear max structure.
	In contrast to the RS method, the Nesterov's smoothing technique and the complexity results of the SNSA method are dimension-independent, which is attractive for high-dimensional problems. A predetermined and fixed smoothing parameter is adopted in the SNSA method. Because of the fixed smoothing parameter, however, the SNSA method can provide approximate solutions at best but not asymptotically exact solutions. {Jalilzadeh et al. \cite{jalilzadeh2022smoothed} employed the concept of smoothable function in \cite{beck2012smoothing,beck2017first} that allows general smoothing techniques, and introduced the sVS-APM algorithm. The sVS-APM algorithm adopts variable sample-size, i.e., increasing batch sizes along iterations, which significantly improves the iteration complexity.}
	
	As far as we know, all existing SA-type methods concentrate on (\ref{orip})-(\ref{orip-2}), and when the existing SA-type methods solve the composite optimization problem (\ref{orip}) with the nonsmooth term $h$ being in (\ref{max}), or (\ref{Emax}) as \cite{duchi2012randomized,jalilzadeh2022smoothed}, they all consider the maximum of finite convex functions as a deterministic function. In this paper, we develop an SA-type method, SSAG, that can tackle (\ref{orip}) where the nonsmooth term $h$ is flexible that can be in the forms of (\ref{orip-2}), (\ref{max}) and (\ref{Emax}). {It is worth emphasizing that it is the first time to consider the index $i$ of $h_i(x)$ in \eqref{max} or $h_i(x,\xi)$ in \eqref{Emax} as a random variable $i\in \mathbb{I}_q$ for $q$ very large.} The SSAG method combines a general smoothing technique and a modified version of the accelerated gradient (AG) method {(Algorithm 1 of \cite{ghadimi2016accelerated})}, so that it can solve general constrained nonsmooth convex composite optimization problems,  instead of the nonsmooth composite optimization problems with a simple nonsmooth term that has easily obtainable proximal operator in \cite{ghadimi2016accelerated}. The main scheme of the modified AG scheme follows that in \cite{ghadimi2016accelerated}, but we adopt different stepsizes in the two gradient descent steps and use different parameters for the convex combination step.
	
	The contributions of this paper are summarized as follows, comparing with existing works.
	\begin{itemize}
		\item We give the definition of smoothing function in Definition \ref{smoothingdefinition}, and the basic assumption for stochastic gradients in Assumption \ref{assumption 1} of {Sect.} \ref{sec:techniques} that will be used throughout the paper. We show that under mild conditions, the smoothing approximations of $h$ in the form of \eqref{orip-2}, or \eqref{max}, or \eqref{Emax} constructed by various smoothing techniques are smoothing functions and satisfy the basic assumption.   
		
		\item
		We propose an SSAG method for solving optimization problems \eqref{orip} with a nonsmooth convex component $h$. We do not assume that $h$ has the linear max structure in \eqref{h} or that its proximal operator is easily obtained.  On the contrary, most SA methods for \eqref{orip} require the linear max structure developed from the Nesterov's smoothing \cite{nesterov2005smooth,wang2022stochastic}, or easily computable proximal operator of $h$ that roots from proximal-type methods, see, e.g., \cite{ghadimi2016mini,wang2017inexact,xiao2014proximal}. {More importantly, we adopt for the first time the new point of view to considering the index $i$ of $h_i(x)$ in \eqref{max} or $h_i(x,\xi)$ in \eqref{Emax} as a random variable $i\in \mathbb{I}_q$ for $q$ very large.}
		
		\item
		Any smoothing technique in SSAG can be adopted, as long as the smoothing approximation 
		$\tilde{h}_{\mu}$ for $h$ satisfies Definition \ref{smoothingdefinition} in {Sect.} \ref{sec:techniques} and Assumption \ref{assumption 1} in {Sect.} \ref{sec:techniques}. 
		The sVS-APM developed in \cite{jalilzadeh2022smoothed} is also promising to allow flexible smoothing techniques that generate smoothable functions defined in \cite{beck2012smoothing}. Smoothing function in Definition \ref{smoothingdefinition} of this paper is different to the smoothable function of \cite{beck2012smoothing}.  Moreover, there is no detailed analysis on what kind of smoothing approximations satisfy the requirements in \cite{jalilzadeh2022smoothed} as we provide in this paper. 
		
		\item
		The SSAG method is flexible, which allows variants using fixed or diminishing smoothing parameters, as well as fixed or variable mini-batch sizes of samples per iteration. {We show that all variants of SSAG achieve the best-known $\cal{SFO}$ complexity of finding an $\epsilon$-approximate solution to problem \eqref{orip}. Moreover, the order of iteration complexity is $\mathcal{O}(\frac{1}{\epsilon})$ that is optimal, when the variable mini-batch size of samples $k$ is adopted at the $k$-th {iteration}.} The complexity results of iterations and $\cal{SFO}$ are better than that of the sVS-APM in \cite{jalilzadeh2022smoothed}. The complexity results can be dimension-independent with proper smoothing techniques, better than those dimension-dependent results for the RS method \cite{duchi2012randomized}. 
		
		\item
		We do numerical experiments on an application arising from 
		the DRO-moment problem that is a general DRO with matrix moment constraints (extension of \cite{wang2023distributionally}).
		We demonstrate the efficiency of the SSAG method by comparing it with several state-of-the-art SA-type methods and deterministic methods. For the composite optimization problem (\ref{orip}) with the nonsmooth term $h$ defined by (\ref{max}) or (\ref{Emax}), 
		{the significant efficiency of SSAG can be attributed to treating the maximum of finite convex functions as a stochastic function of $i\in\mathbb{I}_q$.}
	\end{itemize}

	The remainder of this paper is organized as follows. In {Sect.} \ref{sec:techniques}, we outline some important smoothing techniques and show that the corresponding smoothing approximations satisfy Definition \ref{smoothingdefinition} and Assumption \ref{assumption 1} required in this paper. In {Sect.} \ref{sec:method}, we develop the SSAG method. We show that various smoothing techniques can be enrolled in the method, and show the complexity results with respect to the number of calls to {the} ${\cal SFO}$ and the number of iterations. The numerical experiments are performed in an application of {Sect.} \ref{sec:numerical} to demonstrate the effectiveness and efficiency of our SSAG method.
	
	{\bf{Notation and terminology.}}
	We denote by $\|\cdot\|$ the Euclidean norm, by $\|\cdot\|_1$ the $\ell_{1}$ norm, and by $\langle \cdot,\cdot \rangle$ the inner product. Given a convex function $\hat{h}$ {defined on a nonempty convex set} $X$, for any $x \in X$, we use $\partial \hat{h}(x)$ to denote the subdifferential of $\hat{h}$ at $x$. We adopt the shorthand notation $\|\partial \hat{h}(x)\| = \sup\left\{\|g\|\ :\  g \in \partial \hat{h}(x)\right\}$. For any real number $a$, we denote by $\lceil a\rceil$ and $\lfloor a \rfloor$ the nearest integer to $a$ from above and below, respectively. We denote by $B(x,r):=\left\lbrace y \in \mathbb{R}^{d} :\|x-y\|\leq r\right\rbrace $ the closed ball of radius $r>0$ centered at $x\in \mathbb{R}^{d}$. The plus function $[x]_+ = \max\{x,0\}$, where the max operator is performed entry-wise. A random vector $u \sim \mathcal{N}\left(0, I_{d}\right)$ means that $u$ follows {the normal distribution} with mean vector being the zero vector $0$ and covariance matrix being the $d$-dimensional identity matrix $I_{d}$. The set $\Delta_{d}:=\left\{ x \in \mathbb{R}^{d}: \sum_{i=1}^d x_i=1,~x\geq 0\right\}$ refers to the $d$-dimensional simplex. Given a square matrix $C$, let $\lambda_{\max}(C)$ denote the largest eigenvalue of $C$. The set of all $d \times d$ positive semidefinite matrices is denoted by $\mathbb{S}_{+}^d :=\left\lbrace C \in \mathbb{R}^{d \times d}: C \succeq 0\right\rbrace$.
	
	A function $\hat{f}$ is $\hat{L}_{0}$-Lipschitz over $X$ if
	\begin{eqnarray*}
		|\hat{f}(x)-\hat{f}(y)| \leq \hat{L}_{0}\|x-y\| \quad \text { for any } x, y \in X.
	\end{eqnarray*}
	We say $\hat{f}$ is $L_{\hat{f}}$-smooth over $X$, i.e., $\nabla \hat{f}$ is  Lipschitz with constant $L_{\hat{f}}$, if
	\begin{eqnarray}\label{L-smooth}
		\|\nabla \hat{f}(x) - \nabla \hat{f}(y)\| \leq L_{\hat{f}} \|x-y\|\quad \mbox{for all}\ x, y \in X.
	\end{eqnarray}
	Since $\hat{f}$ is convex, by Theorem 5.8 of \cite{beck2017first}, $\hat{f}$ is $L_{\hat{f}}$-smooth over $X$ is equivalent to
	\begin{eqnarray}\label{psiLsmoo}
		\hat{f}(y)\leq \hat{f}(x)+\left\langle \nabla \hat{f}(x),y-x\right\rangle+\frac{L_{\hat{f}}}{2}\|y-x\|^2 \quad \text { for any } x, y \in X.
	\end{eqnarray}
	{We say a proper function $\hat{f}$ is $\sigma_{\hat{f}}$-strongly convex,  if for any $x, y \in\mathbb{R}^d$ and $\lambda\in (0,1)$,
		\begin{eqnarray}\label{strongconv}
			\hat{f}(\lambda x+(1-\lambda)y)\leq \lambda\hat{f}(x)+(1-\lambda)\hat{f}(y)-\frac{\sigma_{\hat{f}}}{2}\lambda(1-\lambda)\|x-y\|^2 .
		\end{eqnarray}
	}
	For a function $\hat{g}: \mathbb{R}^{d} \to (-\infty,\infty]$, its convex conjugate $\hat{g}^*: \mathbb{R}^{d} \to (-\infty,\infty]$ is defined by
	\begin{eqnarray}\label{conjugate}
		\hat{g}^*(y) = \sup_{x\in \mathbb{R}^{d}} \left\{\langle x, y\rangle - \hat{g}(x)\ :\ x\in {\operatorname{dom} \hat{g}}\right\},
	\end{eqnarray}
	where $\operatorname{dom} \hat{g}=\left\{x\in\mathbb{R}^{d}\ :\  \hat{g}(x)<+\infty\right\}$ is the effective domain of $\hat{g}$.
	
	\section{Smoothing functions}\label{sec:techniques}
	
	There exist several smoothing techniques to construct smoothing approximations of the original nonsmooth function. Throughout the paper, a smoothing approximation is called a smoothing function if it satisfies the following definition.
	\begin{definition}\label{smoothingdefinition}
		Given a convex function $c: X  \rightarrow\mathbb{R}$, we call $\tilde{c}: X\times (0,\bar \mu] \rightarrow\mathbb{R}$ a smoothing function of a convex function $c$ with parameters $(\kappa,K,{\cal {L}}_c)$, if for any $\mu\in(0,\bar{\mu}]$, the function $\tilde{c}(\cdot,\mu)$ is continuously differentiable in $X$ and satisfies the following conditions:
		\newline
		(a) $\lim\limits_{z \rightarrow x,~ \mu \downarrow 0} \tilde{c}(z,\mu) = c(x),~~\forall x \in X$;\newline
		(b) (convexity) $\tilde{c}(\cdot,{\mu})$ is convex on $X$;\newline
		(c) (Lipschitz continuity with respect to $\mu$) there exists a constant $\kappa>0$ such that
		\begin{eqnarray}\label{hmurelation}
			\left\lvert \tilde{c}(x,{\mu_2})-\tilde{c}(x,{\mu_1})\right\rvert \leq \kappa\left\lvert\mu_{1}-\mu_{2}\right\rvert,~~\forall x \in X,~~~\forall \mu_{1},\mu_{2} \in(0,\bar{\mu}];
		\end{eqnarray}
		(d) (Lipschitz smoothness with respect to $x$) there exist constants $K$ and ${\cal L}_{c}>0$ irrelevant to $\mu$ such that $\tilde{c}(\cdot,{\mu})$ is $L_{\tilde{c}_{\mu}}$-smooth on $X$ with gradient Lipschitz constant $L_{\tilde{c}_{\mu}}= K+\frac{{\cal L}_{c}}{\mu}$.
	\end{definition}
	
	Later, we also use $\tilde c_{\mu}$ to represent $\tilde c(\cdot,\mu)$ and say $\tilde c_{\mu}$ is a smoothing function  of $c$ with parameters  $(\kappa, K, {\cal{L}}_c)$ for brevity.  When we do not need to specify the parameters $(\kappa, K, {\cal{L}}_c)$, we just say $\tilde{c}_{\mu} $ is a smoothing function of $c$ for simplicity.  Various smoothing techniques can generate smoothing approximations of the original nonsmooth function that satisfy Definition \ref{smoothingdefinition} (a). More conditions are needed for a smoothing approximation to be a smoothing function in Definition \ref{smoothingdefinition}. 
	
	\vskip 2mm
	\noindent{\bf{Remark 1.}}\quad	
	%
	%
	%
	Let $\tilde{c}_{1,\mu}$ and $\tilde{c}_{2,\mu}$ be smoothing functions of two convex functions $c_1$ and $c_2$ with parameters $\left( \kappa_1, K_1, {\cal L}_{c_1}\right)$ and $\left( \kappa_2, K_2, {\cal L}_{c_2}\right)$, respectively. Then it is easy to show that  $\gamma_1\tilde{c}_{1,\mu}+\gamma_2\tilde{c}_{2,\mu}$ is a smoothing function of $\gamma_1 c_1+ \gamma_2 c_2$ with parameters $\left(\gamma_1 \kappa_1+\gamma_2\kappa_2,~\gamma_1K_1+\gamma_2K_2,~\gamma_1{\cal L}_{c_1}+\gamma_2{\cal L}_{c_2}\right)$, for any constants $\gamma_1,~ \gamma_2>0$.

	Definition \ref{smoothingdefinition} is closely related to the smoothing function given in Definition 3.1 of \cite{bian2020smoothing} and the smoothable function given in  Definition 2.1 of \cite{beck2012smoothing}.	 
	Besides, Definition 3.1 of \cite{bian2020smoothing} requires the gradient consistency property that
	\begin{eqnarray}
		\label{gradient-cons}
		G_{\tilde{c}}(x):=\left\{ \lim_{z \rightarrow x,~ \mu \downarrow 0} \nabla\tilde{c}_{\mu}(z)\right\} \subseteq \partial c(x)\quad \text{for~any}~x \in X.
	\end{eqnarray}
	The $\epsilon$-approximate solution of this paper is measured by \eqref{epsilon-approximate}, using only objective values.
	Consequently, here we do not require a smoothing function to satisfy the gradient consistency property. Moreover, Definition \ref{smoothingdefinition} (d) is the same as Definition 2.1 (ii) of \cite{beck2012smoothing}, while the parameter $K=0$ in Definition 3.1 (iv) of \cite{bian2020smoothing}. The smoothable function in Definition 2.1 of \cite{beck2012smoothing} does not require Definition \ref{smoothingdefinition} (c) of this paper. In fact, Definition \ref{smoothingdefinition} (c) will be useful to develop the complexity results of our SSAG method.

	\subsection{Smoothing approximations}\label{sec2.1}
	Below we outline several smoothing techniques, and 
	make use of them to construct smoothing approximations for the nonsmooth terms 
	$h$ in \eqref{orip-2},
	\eqref{max} and \eqref{Emax}, respectively.
	We will show in {Sect.} \ref{Smoothing properties} that under mild conditions the smoothing approximations constructed by those techniques satisfy Definition \ref{smoothingdefinition}. 
	\vskip 2mm

	\noindent {\textbf{--Nesterov's smoothing for \eqref{orip-2}}}
	
	The Nesterov's smoothing technique for a deterministic nonsmooth function was introduced in \cite{nesterov2005smooth}. Now we extend it to a stochastic nonsmooth function. Assume that $\mathbf{H}(\cdot,\xi)$ has the max linear structure 
	for  $\xi \in \Xi$ almost everywhere (a.e.), i.e., 
	\begin{eqnarray}\label{h}
		{\mathbf{H}}(x ,\xi) =\max\limits_{u\in U} \left\{\left\langle A_{\xi}x, u\right\rangle - Q_{\xi}(u) \right\},
	\end{eqnarray}
	where $U$ is a bounded closed convex set, $A_{\xi}$ is a linear operator, and $Q_{\xi}$ is a continuous convex function.
	
	Then the Nesterov's smoothing technique \cite{nesterov2005smooth} can be used to construct a smoothing approximation
	$\tilde{h}_{\mu}(x):= {\mathbb{E}}[\tilde{\mathbf{H}}_{\mu}(x,\xi)]$ of $h$ in \eqref{orip-2} as follows.
	By inserting a nonnegative, continuous and $\sigma_d$-strongly convex function $d(u)$ in \eqref{h}, we obtain
	\begin{eqnarray}\label{Nessmooth}
		\tilde{\mathbf{H}}_{\mu}(x,\xi):=\max\limits_{u\in U}\left\lbrace \langle A_{\xi}x,u\rangle-Q_{\xi}(u)-\mu d(u)\right\rbrace.
	\end{eqnarray}
	According to Theorem 1 of \cite{nesterov2005smooth}, the gradient of $\tilde{\mathbf{H}}_{\mu}(x,\xi)$ has the formula
	\begin{eqnarray}\label{Nesterov-grad}
		\nabla \tilde{\mathbf{H}}_{\mu}(x,\xi) = A_{\xi}^{T} \hat{u}_{\mu}(x,\xi),
	\end{eqnarray}
	where $\hat{u}_{\mu}(x,\xi)$ is the unique optimal solution of the maximization problem in \eqref{Nessmooth}.
	The linear max structure of $\mathbf{H}(\cdot,\xi)$ in \eqref{h} and easily obtainable $\hat{u}_{\mu}(x,\xi)$ for a.e. $\xi\in \Xi$ are essential for the applicability of Nesterov's smoothing technique.

	\vskip 2mm
	
	\noindent{
		\textbf
		{--Randomized smoothing for \eqref{orip-2}, \eqref{max}, and \eqref{Emax}}}

	The RS technique has attracted much attention and {has been well employed} in \cite{lakshmanan2008decentralized,yousefian2012stochastic, duchi2012randomized}. For $h$ in \eqref{orip-2} involving expectation, Duchi et al. in \cite{duchi2012randomized} employ the RS technique that combines the convolution smoothing technique and the random sampling technique to get a smoothing approximation. By introducing an auxiliary random vector $v\in \mathbb{R}^{d}$ with density function $\rho$, a smoothing function $\tilde{h}_{\mu}(x)$ is defined as
	\begin{eqnarray}\label{RSsmooth}
		\tilde{h}_{\mu}(x):= \mathbb{E}_{v,\xi}\left[ \mathbf{H}(x+\mu v,\xi)\right] .
	\end{eqnarray}
	Here, the expectation $\mathbb{E}$ is taken with respect to both the original random vector $\xi\in \Xi$ and the auxiliary random vector $v\in\mathbb{R}^{d}$. {The auxiliary random vector $v$ is often chosen to follow the uniform distribution on $B(0,1)$ or the normal distribution ${\cal N}(0,I_d)$.}  At iteration $k$, the RS technique queries $\xi_{i,k}$ and  the oracle  at $m$ points $x_k + \mu_k v_{j,k}$ drawn randomly from some neighborhood of $x_k$, where $\xi_{j,k}$ and $v_{j,k}$, $j=1,\ldots,m$ are independent and identically distributed (i.i.d.) samples drawn according to the distributions for $\xi$ and $v$, respectively. The vector $g_k$ is used as the approximation of stochastic gradient
	\begin{eqnarray}\label{radomizedgrad}
		g_{k}=\frac{1}{m} \sum\limits_{j=1}^{m} g_{j,k}, \quad\mbox{where}\quad g_{j,k}\in \partial {\mathbf{H}}\left(x_{k}+ \mu_{k}v_{j, k},\xi_{j, k}\right).
	\end{eqnarray}
	Here $\partial {\mathbf{H}}\left(x_{k}+ \mu_{k}v_{j, k},\xi_{j, k}\right)$ is the subdifferential of the convex function ${\mathbf{H}}(\cdot,\xi_{j,k})$ at the point $x_{k}+ \mu_{k}v_{j, k}$.
	{The RS technique is very easy to implement, and has very little restriction on the original nonsmooth functions.} {One shortcoming is that the function in \eqref{RSsmooth} involves high-dimensional integration if the problem is itself high-dimensional, which may cause computational inefficiency. }

	It is worth mentioning that as in \cite{yousefian2012stochastic}, when the maximum of finite convex functions in \eqref{max} or \eqref{Emax} is considered as a deterministic function,   the above RS technique can be adopted  to obtain the smoothing functions for \eqref{max} and \eqref{Emax}, respectively, as follows:
	\begin{eqnarray}\label{RSmax}
		\tilde{h}_{\mu}(x)=\mathbb{E}_{v}\left[ \max \left\lbrace h_{i}(x+\mu v),~i\in \mathbb{I}_q\right\rbrace\right],
	\end{eqnarray}
	and 
	\begin{eqnarray}\label{RSEmax}
		\tilde{h}_{\mu}(x)=\mathbb{E}_{v,\xi}\left[ \max \left\lbrace h_{i}(x+\mu v,\xi),~i\in \mathbb{I}_q\right\rbrace\right]. 
	\end{eqnarray}
	In this way of smoothing for \eqref{max} or \eqref{Emax}, however, the index $i$ is not treated as a random variable. Fortunately, the following inf-conv smoothing technique will provide smoothing approximations of \eqref{max} and \eqref{Emax} where $i$ can be considered as a random variable as we desired in this paper.

\vskip 2mm

\noindent{\textbf{--Inf-conv smoothing for \eqref{orip-2}, \eqref{max}, and \eqref{Emax}}}

The inf-conv smoothing technique for a deterministic nonsmooth function has been intensively studied in section 4 of \cite{beck2012smoothing}. For $h$ in \eqref{orip-2} involving expectation, we define
its inf-conv smoothing approximation as $\tilde{h}_{\mu}(x):={\mathbb{E}}[{\mathbf{\tilde H}}_{\mu}(x,\xi)]$, in which ${\mathbf{\tilde H}}_{\mu}(x,\xi)$ is constructed according to Definition 4.2 of \cite{beck2012smoothing} by
\begin{eqnarray}\label{inf-conv-H}
\tilde{\mathbf{H}}_{\mu}(x,\xi)=\inf\limits_{y \in \mathbb{R}^{d}}\left\{{\mathbf{H}(y,\xi)}+\mu \omega \left(\frac{x-y}{\mu}\right)\right\}, 
\end{eqnarray}
where $\omega: \mathbb{R}^{d} \rightarrow \mathbb{R}$ 
is convex and 
$\frac{1}{\sigma_{\omega}}$-smooth. When the inf-conv smoothing technique is used, we always assume that for any 
$\mu\in(0,\bar{\mu}]$, a.e. $\xi\in \Xi$ and any $x\in \mathbb{R}^{d}$, ${\tilde{\mathbf{H}}}_{\mu}(x,\xi)$ is finite 
and
$\inf_{x\in \mathbb{R}^{d}} {\bf{H}}(x,\xi) 
> -\infty$.
{This assumption is satisfied, when $\omega$ has bounded level sets and $\mathbf{H}(\cdot,\xi)$ is proper. When the quadratic function $\omega(x)=\frac{1}{2}\|x\|^2$ is chosen, the inf-conv smoothing technique reduces to the well-known Moreau smoothing technique (see {Sect.} 6.7 of \cite{beck2017first})}. By Theorem 4.1 of \cite{beck2012smoothing}, $\tilde{\mathbf{H}}_{\mu}$ has a ``dual" formulation
\begin{eqnarray}\label{inf-conv-reform}
\tilde{\mathbf{H}}_{\mu}(x,\xi) = \max\limits_{y\in \mathbb{R}^{d}}\left\lbrace \langle y,x\rangle - {\mathbf{H}^*(y,\xi) - \mu \omega^*(y)} \right\rbrace,
\end{eqnarray}
where $\mathbf{H}^*(\cdot,\xi)$ and $\omega^*(\cdot)$ are the convex conjugates of $\mathbf{H}(\cdot,\xi)$ and $\omega(\cdot)$, respectively.
Moreover, $\tilde{\mathbf{H}}_{\mu}(\cdot,\xi)$ is differentiable with gradient
$\nabla \tilde{\mathbf{H}}_{\mu}(\cdot,\xi)$ of the form
\begin{eqnarray}\label{grad-inf-conv}
\nabla \tilde{\mathbf{H}}_{\mu}(x,\xi) = \nabla \omega\left(\frac{x-\hat{v}_{\mu}(x,\xi)}{\mu} \right),
\end{eqnarray}
where $\hat{v}_{\mu}(x,\xi)$ is a minimizer of the right-hand side of the inf problem in \eqref{inf-conv-H}.
For $h$ in \eqref{max} that involves the maximization of finite convex functions, we can obtain the smoothing approximation $\tilde h_{\mu}$ of $h$ by Examples 4.4 and 4.9 of \cite{beck2012smoothing}, with the aid of inf-conv smoothing technique. To be specific, let $b: \mathbb{R}^q \to \mathbb{R}$ and $\omega: \mathbb{R}^q \to \mathbb{R}$ be of the form
\begin{eqnarray}\label{b-omega}
b(z) = \max\left\{z_1,\ldots,z_q\right\},\quad \omega(z)= \ln\left(\sum_{i=1}^q e^{z_i}\right),
\end{eqnarray}
where $z=\left(z_1,\ldots,z_q\right)^T$. Then the gradient $\nabla \omega$ of $\omega$ is
\begin{eqnarray}\label{gradient-omega}
\nabla \omega(z) = \frac{1}{\sum_{i=1}^q e^{z_i}} (e^{z_1},\ldots,e^{z_q})^{T} \quad\text{with}\quad \|\nabla \omega(z)\|\leq 1,
\end{eqnarray}
and the convex conjugate $\omega^*$ of $\omega$ is
\begin{eqnarray}\label{2.11}
\omega^*(s) = \sum_{i=1}^q s_i \ln s_i \quad \mbox{with}\quad \operatorname{dom} \omega^* = \Delta_q.
\end{eqnarray}
The inf-conv smoothing approximation $\tilde{b}_{\mu}(z)$ can be expressed as
\begin{eqnarray}\label{bt34}
\tilde b_{\mu}(z) &=& \inf_{s\in \mathbb{R}^q}
\left\{b(s) + \mu \omega\left(\frac{z-s}{\mu}\right) \right\}
= \max\limits_{s\in \mathbb{R}^q}
\left\{\langle s, z \rangle - b^*(s) - \mu \omega^*(s)\right\}\nonumber\\
&=& \mu \omega\left(\frac{z}{\mu}\right)
= \mu \ln \left(\sum_{i=1}^q e^{z_i/\mu}\right). 
\end{eqnarray}
Interestingly, $\tilde b_{\mu}(z)$ in \eqref{bt34} is the same as the so-called Neural Networks smoothing function for $b(z)$ in \eqref{b-omega} constructed by convolution and mathematical induction in \cite{chen2012smoothing}.

%

With the aid of  \eqref{bt34}, we provide a smoothing approximation of (\ref{max}) as follows. This approximation is not directly deduced from the inf-conv smoothing technique, but the inf-conv smoothing technique plays an important role to construct it. Let {$\tilde h_{i,\mu}$} be a smoothing approximation of $h_{i}$ for each $i\in \mathbb{I}_q$ constructed by a certain smoothing technique. Then the smoothing approximation $\tilde h_{\mu}$ of $h$ defined in \eqref{max} can be constructed by 
\begin{eqnarray}\label{inf-conv-max}
\tilde h_{\mu}(x) = \mu \ln \left( \sum_{i=1}^q e^{\frac{\tilde{h}_{i,\mu}(x)}{\mu}} \right).
\end{eqnarray}
By direct computation, the gradient
$\nabla \tilde{h}_{\mu}(x)$ is
\begin{eqnarray}\label{maxgrad}
\nabla \tilde{h}_{\mu}(x) = \sum_{i=1}^q
p_{x,\mu}(i) \nabla \tilde{h}_{i,\mu}(x),
\quad \mbox{with}\quad
p_{x,\mu}(i)=\frac{e^{{\tilde{h}_{i,\mu}(x)}/{\mu}}}{\sum_{j=1}^q e^{{\tilde{h}_{j,\mu}(x)}/{\mu}}}.
\end{eqnarray}
{In Proposition 4.1 of 
\cite{beck2012smoothing}, they considered the similar smoothing approximation of \eqref{max} where $h_i$ is smooth for each $i\in \mathbb{I}_q$. Here we do not require $h_i$ to be smooth and this will be useful in DRO with ambiguity set defined by moment constraints outlined in {Sect.} \ref{subsec:DROportfolio}, when the objectives have nonsmooth terms reflecting sparse or risk averse requirements. }

For $h$ in (\ref{Emax}), let $\tilde h_{i,\mu}(\cdot,\xi)$ be a smoothing approximation of $h_{i}(\cdot,\xi)$ for each $i\in \mathbb{I}_q$ constructed by a certain smoothing technique. Then the smoothing approximation $\tilde h_{\mu}$ of $h$ defined in \eqref{Emax} can be constructed by 
\begin{eqnarray}\label{inf-conv-Emax}
\tilde h_{\mu}(x) = \mathbb{E}_{\xi}\left[ \mu \ln \left( \sum_{i=1}^q e^{\frac{\tilde h_{i,\mu}(x,\xi)}{\mu}} \right)\right].
\end{eqnarray}

For a fixed $\xi\in\Xi$, we view 
\begin{eqnarray}
\label{pxmu_xii}
p_{x,\mu}^{\xi}(i) := \frac{e^{{\tilde h_{i,\mu}(x,\xi)}/{\mu}}}{\sum_{j=1}^q e^{{\tilde h_{j,\mu}(x,\xi)}/{\mu}}}
\end{eqnarray}
as a probability of $i$ over the support set $\mathbb{I}_q$. Then we have
\begin{eqnarray}\label{maxgrad1}
\nabla \tilde{h}_{\mu}(x) = \mathbb{E}_{\xi}\left[ 
\sum_{i=1}^q p_{x,\mu}^{\xi}(i) \nabla \tilde h_{i,\mu}(x,\xi)\right]
= \mathbb{E}_{\xi,i}\left[ \nabla \tilde h_{i,\mu}(x,\xi)\right].
\end{eqnarray}



\subsection{Stochastic gradients and fundamental assumptions}
Let $\tilde{h}_{\mu}$ be a smoothing approximation of $h$. 
The smooth counterpart of \eqref{orip} is
\begin{eqnarray}\label{smoothp}
\min\limits_{x\in X}\left\lbrace \tilde{\psi}_{\mu}(x): = f(x) + \tilde{h}_{\mu}(x)\right\rbrace.
\end{eqnarray}

For \eqref{orip}-\eqref{orip-2}, a stochastic gradient for the smoothing function $\tilde{\psi}_{\mu}$ at $x$ is
\begin{eqnarray}\label{gradorip-2}
\nabla\tilde{\mathbf{\Psi}}_{\mu}(x,\xi)=\nabla f(x)+\nabla\tilde{\mathbf{H}}_{\mu}(x,\xi),
\end{eqnarray}
where $\xi$ is the random vector following a certain discrete or continuous probability density $p(\xi)$,
and $\tilde{\mathbf{H}}_{\mu}(x,\xi)$ is a smoothing function of $\mathbf{H}(x,\xi)$ in \eqref{orip-2}. 
For \eqref{orip} with $h$ being defined in \eqref{max}, a stochastic gradient for the smoothing function is
\begin{eqnarray}\label{gradmax}
\nabla\tilde{\mathbf{\Psi}}_{\mu}(x,i)=\nabla f(x)+\nabla \tilde{h}_{i,\mu}(x),
\end{eqnarray}
where $i$ is a random vector whose probability is $p_{x,\mu}(i)$ in \eqref{maxgrad} for 
$i\in \mathbb{I}_q$. 
%
For \eqref{orip} with $h$ being defined in \eqref{Emax}, a stochastic gradient for the smoothing function is
\begin{eqnarray}\label{gradEmax}
\nabla\tilde{\mathbf{\Psi}}_{\mu}(x,\xi,i)=\nabla f(x)+\nabla \tilde{h}_{i,\mu}(x,\xi),
\end{eqnarray}
where $(\xi^T,i)^T \in  \Xi \times \mathbb{I}_q$ is a random vector whose probability is $p_{x,\mu}(\xi,i) := p(\xi)p_{x,\mu}^{\xi}(i)$
with $p_{x,\mu}^{\xi}(i)$ being defined in \eqref{pxmu_xii}.

For the purpose of unification, we use $\zeta$ to represent the random vector involved in the smoothing approximation of $\tilde h_{\mu}$, 
no matter $h$ is in the form of (\ref{orip-2}), or (\ref{max}), or (\ref{Emax}).
Throughout the paper, we make the following assumptions for the smoothing stochastic gradients of smoothing functions.

\begin{assumption}\label{assumption 1}
For every $x\in X$ and every $\mu\in(0,\bar{\mu}]$, there exists a constant $\sigma>0$ irrelevant to $\mu$ such that 
\begin{eqnarray*}
&(a)&\ {\mathbb{E}}_{\zeta}\left[\nabla\tilde{\mathbf{\Psi}}_{\mu}(x, \zeta)\right]=\nabla\tilde{\psi}_{\mu}(x),\\
&(b)&\ {\mathbb{E}}_{\zeta}\left[\|\nabla\tilde{\mathbf{\Psi}}_{\mu}(x, \zeta)-\nabla\tilde{\psi}_{\mu}(x)\|^2\right]\leq \sigma^2.
\end{eqnarray*}
\end{assumption}

For a fixed $\mu\in(0,\bar{\mu}]$, (a) and (b) in Assumption \ref{assumption 1} are common assumptions for stochastic algorithms dealing with smooth objective functions; see e.g., Assumption 1 of \cite{ghadimi2016mini}, and (5.1)--(5.2) of \cite{lan2020communication}. In contrast, Assumption \ref{assumption 1} needs (a) and (b) to be held for every $\mu\in(0,\bar{\mu}]$.
%

In fact, for every $x\in X$ {and every $\mu\in(0,\bar{\mu}]$}, Assumption \ref{assumption 1} (a) is easy to fulfill by just using the definitions of stochastic gradients in \eqref{gradorip-2}-\eqref{gradEmax}.

When considering \eqref{orip}-\eqref{orip-2}, {$\zeta = \xi$} if the Nesterov's smoothing and the inf-conv smoothing techniques are used, and $\zeta = (\xi^T,v^T)^T$ if the RS technique is employed, and 
\begin{eqnarray}\label{as1orip2}
\nabla \tilde \psi_{\mu}(x) &=& \nabla f(x) + \nabla \tilde{h}_{\mu}(x) = \nabla f(x) + \nabla \mathbb{E}_{\zeta}\left[\tilde{\mathbf{H}}_{\mu}(x,\zeta)\right] \nonumber\\
&=& \mathbb{E}_{\zeta}\left[\nabla f(x) + \nabla \tilde{\mathbf{H}}_{\mu}(x,\zeta)\right]
= \mathbb{E}_{\zeta}\left[\nabla {\tilde{\mathbf{\Psi}}}_{\mu}(x,\zeta)\right],
\end{eqnarray}
where the position of the gradient operator and the expectation operator can be changed according to Proposition 4 of \cite{ruszczynski2003stochastic} and the facts that $\tilde{\mathbf{\Psi}}_{\mu}(\cdot,\zeta)$ is well defined and finite, and $\tilde{\mathbf{\Psi}}_{\mu}(\cdot,\zeta)$ is convex and differentiable at every $x\in X$.
When considering \eqref{orip} with $h$ being defined in \eqref{max},
we have {$\zeta =i$} and  by \eqref{maxgrad}, 
\begin{eqnarray}\label{as1max}
\nabla\tilde{\psi}_{\mu}(x)
&=& \nabla f(x)+\nabla\tilde{h}_{\mu}(x)=\nabla f(x)+\sum_{\zeta=1}^q p_{x,\mu}(\zeta)\nabla \tilde{h}_{\zeta,\mu}(x)\nonumber\\
&=& \sum_{\zeta=1}^q p_{x,\mu}(\zeta) \left(\nabla f(x) + \nabla \tilde{h}_{\zeta,\mu}(x) \right)
= \mathbb{E}_{\zeta}\left[\nabla \tilde{\mathbf{\Psi}}_{\mu}(x,\zeta)\right].
\end{eqnarray}
When considering \eqref{orip} with $h$ being defined in \eqref{Emax},  we have $\zeta =(\xi^{T},i)^{T}$, and by \eqref{maxgrad1}
\begin{eqnarray*}
\nabla\tilde{\psi}_{\mu}(x)
=\mathbb{E}_{\xi} \left[\sum\limits_{i=1}^q p_{x,\mu}^{\xi}(i)\left(\nabla f(x) +\nabla \tilde h_{i,\mu}(x,\xi)\right)\right]
=\mathbb{E}_{\zeta}\left[\nabla \tilde{\mathbf{\Psi}}_{\mu}(x,\zeta)\right].
\end{eqnarray*}

We will show in {Sect.} \ref{Smoothing properties} that Assumption \ref{assumption 1} (b) holds for every $x\in X$ and every $\mu\in(0,\bar{\mu}]$ under mild assumptions for the stochastic gradients of smoothing approximations. When considering \eqref{orip}-\eqref{orip-2}, we will repeatedly use the following fact with respect to different $\tilde h_{\mu}$ constructed using different smoothing techniques.
\begin{eqnarray}\label{As-b-12}
&&\mathbb{E}_{\zeta}\left[\left\|\nabla \tilde{\mathbf{\Psi}}_{\mu}(x,\zeta)-\nabla\tilde{\psi}_{\mu}(x)\right\|^2\right]\nonumber\\
&&\quad=\mathbb{E}_{\zeta}\left[\left\|\nabla \tilde{\mathbf{H}}_{\mu}(x,\zeta)-\nabla\tilde{h}_{\mu}(x)\right\|^2\right]\nonumber\\
&&\quad=\mathbb{E}_{\zeta}\left[\left\|\nabla \tilde{\mathbf{H}}_{\mu}(x,\zeta)\right\|^2+\left\|\nabla \tilde{h}_{\mu}(x)\right\|^2-2\left\langle \nabla \tilde{\mathbf{H}}_{\mu}(x,\zeta), \nabla \tilde{h}_{\mu}(x)\right\rangle \right]\nonumber\\
&&\quad\leq\mathbb{E}_{\zeta}\left[\left\|\nabla \tilde{\mathbf{H}}_{\mu}(x,\zeta)\right\|^2\right].
\end{eqnarray}


\subsection{Smoothing properties}\label{Smoothing properties}
Now we show that the smoothing approximations constructed by the Nesterov's smoothing, the RS and the inf-conv smoothing techniques satisfy Definition \ref{smoothingdefinition} as well as Assumption \ref{assumption 1} under mild conditions. {Without specification, for those smoothing approximations,  $\bar \mu >0$ in Definition \ref{smoothingdefinition} can be arbitrarily chosen.}  The proofs of Lemmas are given in Appendix \ref{appendixa}.


\begin{lemma}\label{lemma2.3}
For $h$ in \eqref{orip-2}, assume that ${\mathbf{H}(x,\xi)}$ 
has a linear max structure in \eqref{h} for a.e. $\xi \in \Xi$, and there exists a constant $c_1>0$ such that $\|A_{\xi}\| \le c_1$ for a.e. $\xi\in \Xi$. 
Let $\tilde{h}_{\mu}(x) = \mathbb{E}_{\xi}\left[{\mathbf{\tilde H}_{\mu}(x,\xi)}\right]$ with $\mathbf{\tilde H}_{\mu}(x,\xi)$  constructed by the Nesterov's smoothing technique in \eqref{Nessmooth}. Then the following statements hold.
\begin{itemize}
\item[(i)]
$\tilde{h}_{\mu}$ is a smoothing function of $h$ satisfying Definition \ref{smoothingdefinition}, with {parameters $
	(\kappa, K, {\cal{L}}_h) =
	\left(\max_{u\in U}\left\{d(u)\right\}, 0 , {\frac{\left\|\mathbb{E}_{\xi}\left[ A_{\xi}\right]\right\|^2}{\sigma_{d}}}\right)$}. 

\item[(ii)] Assumption \ref{assumption 1} holds with $\zeta=\xi$ and $\sigma^2 = c_1^2 \max_{u\in U} \left\{\|u\|^2\right\}$.
\end{itemize}
\end{lemma}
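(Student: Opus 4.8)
The plan is to verify the four conditions of \cref{smoothingdefinition} first at the level of the integrand $\tilde{\mathbf{H}}_{\mu}(\cdot,\xi)$ for each fixed $\xi$, using Theorem 1 of \cite{nesterov2005smooth} and the gradient formula \eqref{Nesterov-grad}, and then transfer each property to $\tilde{h}_{\mu}=\mathbb{E}[\tilde{\mathbf{H}}_{\mu}(\cdot,\xi)]$ by taking expectations. Write $D:=\max_{u\in U}\{d(u)\}$, which is finite because $U$ is bounded and closed and $d$ is continuous. From \eqref{Nessmooth}, each $\tilde{\mathbf{H}}_{\mu}(\cdot,\xi)$ is a pointwise maximum over $u$ of functions that are affine in $x$, hence convex; convexity is preserved under expectation, which gives \cref{smoothingdefinition}(ii). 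Differentiability in $x$ and the formula $\nabla\tilde{\mathbf{H}}_{\mu}(x,\xi)=A_{\xi}^{T}\hat{u}_{\mu}(x,\xi)$ come from Theorem 1 of \cite{nesterov2005smooth}, together with the pointwise smoothness estimate that $\tilde{\mathbf{H}}_{\mu}(\cdot,\xi)$ is $\frac{\|A_{\xi}\|^{2}}{\mu\sigma_{d}}$-smooth, which follows because $Q_{\xi}+\mu d$ is $\mu\sigma_{d}$-strongly convex on $U$ (as $d$ is $\sigma_{d}$-strongly convex), so the inner maximization is $\mu\sigma_{d}$-strongly concave in $u$.

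For conditions (i) and (iii) I would use an elementary sandwich obtained from $0\le \mu d(u)\le \mu D$: worst-casing the penalty in \eqref{Nessmooth} yields $\mathbf{H}(x,\xi)-\mu D\le \tilde{\mathbf{H}}_{\mu}(x,\xi)\le \mathbf{H}(x,\xi)$ for every $\xi$, so after taking expectations $|\tilde{h}_{\mu}(x)-h(x)|\le \mu D$. Combined with the continuity of $h$ (convex and finite on $X$) this gives $\lim_{z\to x,\ \mu\downarrow 0}\tilde{h}_{\mu}(z)=h(x)$, i.e. (i). For (iii), evaluating the defining max at the maximizer of the other parameter gives, for each $\xi$, $(\mu_{1}-\mu_{2})d(\hat{u}_{\mu_{1}})\le \tilde{\mathbf{H}}_{\mu_{2}}(x,\xi)-\tilde{\mathbf{H}}_{\mu_{1}}(x,\xi)\le (\mu_{1}-\mu_{2})d(\hat{u}_{\mu_{2}})$; since $0\le d(\cdot)\le D$ on $U$, the difference is bounded by $D|\mu_{1}-\mu_{2}|$, and the same bound survives the expectation, yielding $\kappa=D=\max_{u\in U}\{d(u)\}$.

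Condition (iv) is the crux. Taking expectation of the pointwise gradient formula gives $\nabla\tilde{h}_{\mu}(x)=\mathbb{E}[A_{\xi}^{T}\hat{u}_{\mu}(x,\xi)]$, where the interchange of $\nabla$ and $\mathbb{E}$ is justified, exactly as in the unbiasedness argument preceding the lemma, by Proposition 4 of \cite{ruszczynski2003stochastic}. Estimating $\|\nabla\tilde{h}_{\mu}(x)-\nabla\tilde{h}_{\mu}(y)\|\le \mathbb{E}[\|\nabla\tilde{\mathbf{H}}_{\mu}(x,\xi)-\nabla\tilde{\mathbf{H}}_{\mu}(y,\xi)\|]$ and inserting the pointwise constant gives $L_{\tilde{h}_{\mu}}\le \frac{\mathbb{E}[\|A_{\xi}\|^{2}]}{\mu\sigma_{d}}$, hence $K=0$. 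The main obstacle is producing the constant in the sharp form $L_{h}=\frac{\|\mathbb{E}[A_{\xi}]\|^{2}}{\sigma_{d}}$ claimed in the statement rather than the cruder $\frac{\mathbb{E}[\|A_{\xi}\|^{2}]}{\sigma_{d}}$ (the latter being at most $\frac{c_{1}^{2}}{\sigma_{d}}$ via $\|A_{\xi}\|\le c_{1}$): moving the norm inside the expectation and squaring loses a Jensen gap, and a sign-varying $A_{\xi}$ shows the averaged-operator form cannot hold without extra structure, so I would scrutinize precisely what additional assumption on $A_{\xi}$ lets the argument keep $\mathbb{E}[A_{\xi}]$ intact.

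Finally, for part (ii): Assumption~\ref{assumption 1}(a) is exactly the unbiasedness identity already derived for \eqref{orip}--\eqref{orip-2} in the text. For (b) I would start from the bound \eqref{As-b-12}, namely $\mathbb{E}_{\xi}[\|\nabla\tilde{\mathbf{\Psi}}_{\mu}(x,\xi)-\nabla\tilde{\psi}_{\mu}(x)\|^{2}]\le \mathbb{E}_{\xi}[\|\nabla\tilde{\mathbf{H}}_{\mu}(x,\xi)\|^{2}]$, and then use $\nabla\tilde{\mathbf{H}}_{\mu}(x,\xi)=A_{\xi}^{T}\hat{u}_{\mu}(x,\xi)$ with $\hat{u}_{\mu}(x,\xi)\in U$ and $\|A_{\xi}\|\le c_{1}$ to get the pointwise bound $\|\nabla\tilde{\mathbf{H}}_{\mu}(x,\xi)\|\le c_{1}\max_{u\in U}\|u\|$, so that $\sigma=c_{1}\max_{u\in U}\{\|u\|\}$. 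This part is routine once the gradient formula is available.
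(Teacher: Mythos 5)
Your proposal follows the same route as the paper's own proof: the sandwich $\tilde{\mathbf{H}}_{\mu}(x,\xi)\le \mathbf{H}(x,\xi)\le \tilde{\mathbf{H}}_{\mu}(x,\xi)+\mu\max_{u\in U}d(u)$ (quoted in the paper from (2.7) of \cite{nesterov2005smooth}) for \cref{smoothingdefinition}(i), a max-difference argument giving $\kappa=\max_{u\in U}\{d(u)\}$ for (iii), Theorem 1 of \cite{nesterov2005smooth} plus taking expectations for (ii) and (iv), and the gradient formula \eqref{Nesterov-grad} together with the boundedness of $U$ and $\|A_{\xi}\|\le c_1$ for part (ii) via \eqref{As-b-12}. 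All of these steps are correct and match the paper essentially line for line.

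The one place where you stopped short---the constant $L_{h}=\frac{\|\mathbb{E}[A_{\xi}]\|^{2}}{\sigma_{d}}$---is not a defect of your argument but of the paper's. The paper's proof asserts that ``using Theorem 1 \ldots and taking the expectation'' yields $L_{\tilde{h}_{\mu}}=\frac{\|\mathbb{E}[A_{\xi}]\|^{2}}{\sigma_{d}\mu}$, and this step is invalid: Theorem 1 gives the pointwise constant $\frac{\|A_{\xi}\|^{2}}{\sigma_{d}\mu}$ for each $\xi$, and averaging these yields $\frac{\mathbb{E}[\|A_{\xi}\|^{2}]}{\sigma_{d}\mu}$, which dominates the paper's constant by Jensen's inequality, in general strictly. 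Your sign-varying example in fact disproves the paper's form: take $A_{\xi}=\pm I$ with equal probability, $Q_{\xi}\equiv 0$, $U$ the unit ball, and $d(u)=\frac{1}{2}\|u\|^{2}$; then $\mathbb{E}[A_{\xi}]=0$, so the paper's constant would force $\nabla\tilde{h}_{\mu}$ to be constant, yet by the symmetry $u\mapsto -u$ one computes that $\tilde{h}_{\mu}$ is the Huber-type smoothing of $\|x\|$, whose gradient has Lipschitz constant $\frac{1}{\mu}>0$. So the lemma holds as you proved it, with the corrected constant $L_{h}=\frac{\mathbb{E}[\|A_{\xi}\|^{2}]}{\sigma_{d}}\le \frac{c_{1}^{2}}{\sigma_{d}}$, which is finite under the stated hypothesis and suffices for \cref{smoothingdefinition}(iv) with $K=0$; no additional assumption on $A_{\xi}$ can rescue the averaged-operator form. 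Everything else, including part (ii) with $\sigma=c_{1}\max_{u\in U}\{\|u\|\}$, is exactly the paper's argument.
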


Now we turn to the smoothing approximations constructed by the RS technique.

\begin{lemma}\label{lemma2.2}
For $h$ in \eqref{orip-2}, 
let $\tilde{h}_{\mu}$ be defined as in \eqref{RSsmooth} that is constructed by the RS technique, and let the random vector $v$ follow the uniform density function $\rho(v)$ on $B(0,1)$.
Assume that 
\begin{eqnarray}
\label{L0uniform}
\mathbb{E}_{\xi}\left[\|\partial \mathbf{H}(x,\xi)\|^{2}\right] \leq L_{0}^{2}~
	\text{for~any}~x \in
	\operatorname{int}
	\left(X+B(0,\mu)\right).
\end{eqnarray}
Then the following statements hold.
\begin{itemize}
	\item[(i)] 
	$\tilde{h}_{\mu}$ is a smoothing function of $h$ satisfying Definition \ref{smoothingdefinition}, with {parameters 
		$
		(\kappa, K, {\cal{L}}_h) =
		\left(
		\frac{L_{0}^2+1}{2}, 0,   L_{0} \sqrt{d}\right)$}.
	
	\item[(ii)] Assumption \ref{assumption 1} holds with $\zeta=(\xi^{T},v^{T})^{T}$
	and $\sigma^2 = L_0^2$.
\end{itemize}
\end{lemma}

\begin{lemma}\label{lemma2.21}
For $h$ in \eqref{orip-2}, let $\tilde{h}_{\mu}$ be defined as in \eqref{RSsmooth} that is constructed by the RS technique, and let the random vector $v\sim \mathcal{N}\left(0, I_{d}\right)$. Assume that
\begin{eqnarray}
	\label{L0Gaussian}
	\mathbf{H}(\cdot,\xi)
	~\text{is}~L_{0}\text{-Lipschitz~ for~ a.e.~}\xi\in\Xi.
\end{eqnarray}
Then the following statements hold.
\begin{itemize}
	\item[(i)] 
	$\tilde{h}_{\mu}$ is a smoothing function of $h$ satisfying Definition \ref{smoothingdefinition}, with {parameters 
		$
		(\kappa,K,{\cal{L}}_h) =
		\left(L_{0}\sqrt{d},0, L_{0}\right)$}.
	
	\item[(ii)] Assumption \ref{assumption 1} holds with $\zeta=(\xi^{T},v^{T})^{T}$ and $\sigma^2 = L_0^2$.
\end{itemize}
\end{lemma}

We consider the approximations constructed by the inf-conv smoothing technique. Recall that the function  $\omega$ in (\ref{inf-conv-H}) is $\frac{1}{\sigma_{\omega}}$-smooth.
\begin{lemma}\label{lemma2.1}
For $h$ in \eqref{orip-2}, let 
$\tilde{h}_{\mu}(x) = \mathbb{E}_{\xi}\left[{\mathbf{\tilde H}}_{\mu}(x,\xi)\right]$ with
${\mathbf{\tilde H}}_{\mu}(x,\xi)$ 
constructed by the inf-conv smoothing technique in \eqref{inf-conv-H}.
The following statements hold.

\begin{itemize}
	\item[(i)]
	Assume that
	$\omega^*(y)\le 0$ for all $y\in \operatorname{dom} \omega^*$.
	Then 
	$\tilde{h}_{\mu}$ is a smoothing function of $h$
	satisfying Definition \ref{smoothingdefinition}, with {parameters $
		(\kappa,K,{\cal L}_h) = 
		\left(\omega({0}),0, \frac{1}{\sigma_{\omega}}\right)$.}
	
	\item[(ii)]	Assume that $\omega(0)\le 0$, $\omega(\cdot)$ is level bounded, the function ${\bf{H}}(\cdot,\xi)$ is lower semicontinuous, {and $\inf_{x\in \mathbb{R}^d} {\bf H}(x,\xi)>-\infty$} for a.e. $\xi\in\Xi$, and $D[{\bf{H}},{\omega^*}] = \mathbb{E}_{\xi}[\sup_{x\in X} \inf_{d\in {\partial {\bf H}(x,\xi)}} \omega^*(d)]<\infty$. Then 
	$\tilde{h}_{\mu}$ is a smoothing function of $h$ on $X$ satisfying Definition \ref{smoothingdefinition}, with 
	{parameters 
		$
		(\kappa,K,{\cal L}_h) =
		\left(D[{\bf{H}},{\omega^*}],0, \frac{1}{\sigma_{\omega}}\right)$.}
	
	\item[(iii)] Assume that $\max_{x\in X}\left\{\left\|\nabla \omega(x)\right\|\right\} < \infty$. Then Assumption \ref{assumption 1} holds with $\zeta=\xi$ and $\sigma^2=\max_{x\in X}\left\{ \left\|\nabla \omega(x)\right\|^2\right\}.$
\end{itemize}
\end{lemma}

\noindent{\bf{Remark 2.}}\quad	
For $\omega$ defined in \eqref{b-omega}, its convex conjugate $\omega^*$ in \eqref{2.11} satisfies the assumption in Lemma \ref{lemma2.1} (i) that $\omega^*(y)\le 0$ for all $y\in \operatorname{dom} \omega^*$, and its gradient $\nabla \omega$ in \eqref{gradient-omega} satisfies the assumption in Lemma \ref{lemma2.1} (iii) that $\max_{x\in \mathbb{R}^d} \{\|\nabla \omega(x)\|\}\le 1 < \infty$. 

In Example 4.8 of \cite{beck2012smoothing}, for the function   $t(x)=\|x\|$, the smoothing approximation $t_{\mu}(x) = \sqrt{\|x\|^2 + \mu}$ can be obtained using the inf-conv smoothing technique 
with 
$
\omega(x) =\sqrt{1+\|x\|^2}
$ over $\mathbb{R}^d$. The convex conjugate   
$\omega^*(y) = -\sqrt{1-\|y\|^2}$ with $\operatorname{dom}  \omega^* = \{y\ :\ \|y\|\le 1\}$ clearly satisfies the assumptions in Lemma \ref{lemma2.1} (i)
and consequently $t_{\mu}(x)$ is a smoothing function of $t(x)$ satisfying Definition \ref{smoothingdefinition}. {Its gradient $\nabla \omega(x) = \frac{x}{\sqrt{1+\|x\|^2}}$ satisfies the assumptions in Lemma \ref{lemma2.1}  (iii), because $\max_{x\in \mathbb{R}^d} \|\nabla \omega(x)\|<1$.}

For $\omega(x) = \frac{1}{2}\|x\|^2$, the inf-conv smoothing technique reduces to the well-known Moreau smoothing technique, and the assumptions on $\omega$ in Lemma \ref{lemma2.1}  (ii) are satisfied. If in addition, $D[\mathbf{H},\omega^*]<\infty$, then the Moreau smoothing satisfies Lemma \ref{lemma2.1} (ii).
{If $X$ is bounded and ${\bf{H}}(\cdot,\xi)$ is locally Lipschitz, then $D[\mathbf{H},\omega^*]<\infty$. If ${\bf{H}}(\cdot,\xi)$ is globally Lipschitz, then without the boundedness of $X$, we still have $D[\mathbf{H},\omega^*]<\infty$.}  The feasible region $X$ is bounded is necessary to show Lemma \ref{lemma2.1} (iii) holds.

\vskip 2mm
\begin{lemma}\label{lem3.21}
For $h$ in (\ref{max}), assume that for each $i\in \mathbb{I}_q$, 
$\tilde h_{i,\mu}$ is a smoothing function of $h_i$ 
{with parameters $(\kappa_{i}, K_i,{\cal{L}}_{h_i})$}. Let $\tilde h_{\mu}$ be the smoothing approximation defined in (\ref{inf-conv-max}) based on $\tilde h_{i,\mu}$ and the inf-conv smoothing technique. {Assume also that for each $i\in\mathbb{I}_q$, there exists a constant $M_i \ge 0$ irrelevant to $x\in X$ and $\mu\in (0,\bar{\mu}]$ such that 
	\begin{eqnarray}\label{Mi}
		\max\limits_{x\in X,~\mu\in (0,\bar{\mu}]}\left\{\left\|\nabla \tilde{h}_{i,\mu}(x) \right\|\right\} \leq M_i < \infty.
\end{eqnarray}} Then the following statements hold. 

\begin{itemize}
	\item[(i)]
	The function ${\tilde h}_{\mu}$ is a smoothing function of $h$ satisfying Definition \ref{smoothingdefinition}, with
	{parameters 
		$
		(\kappa, K, {\cal L}_h)$  where 
		$\kappa =  \ln q + \sqrt{\sum_{i=1}^q \kappa_i^2}$, $K=\sum_{i=1}^q  K_{i}$, ${\cal L}_h =\left( \sum_{i=1}^{q}M_i\right)^2
		+\sum_{i=1}^{q}{\cal{L}}_{h_i}. $}
	
	\item[(ii)] Assumption \ref{assumption 1} holds with $\zeta=i
	$ and $\sigma^2 = \mathbb{E}_{i}{\left[\left(M_i\right)^2\right]}$.	
\end{itemize}
\end{lemma}

\noindent {\bf{Remark 3.}}\quad	
The assumption in (\ref{Mi}) is a mild assumption that can be satisfied by $\tilde{h}_{i,\mu}$ constructed from various smoothing techniques, as illustrated below.

Let $\tilde h_{i,\mu}$ be constructed by the Nesterov's smoothing technique as in (\ref{Nessmooth}),
i.e., 
\begin{eqnarray*}
\tilde{h}_{i,\mu}(x):=\max\limits_{u\in U_i}\left\lbrace \langle A_{i}x,u\rangle-Q_{i}(u)-\mu d(u)\right\rbrace,
\end{eqnarray*}
where $U_i$ is a bounded closed set, $A_i$ is a matrix, and $Q_i$ is a continuous convex function, $d$ is a nonnegative $\sigma$-strongly convex function. Let $\hat{u}_{\mu}(x,i)$ be the unique optimal solution of the maximization problem above.
Then we get
\begin{eqnarray*}
\left\|\nabla{\tilde h}_{i,\mu}(x) \right\|
=\left\|A_{i}^{T} \hat{u}_{\mu}(x,i)\right\|
\leq   \left\|A_{i} \right\|\max_{u\in U_i} \|u\|:= M_i<\infty.
\end{eqnarray*}

Now we consider $\tilde h_{i,\mu}$ being constructed by the RS technique as in (\ref{RSsmooth}), i.e., 
$\tilde{h}_{i,\mu}(x):= {\mathbb{E}}_{v}[ h_i(x+\mu v)].
$
Letting $g_{i,v}\in \partial h_i\left(x+ \mu v\right)$, we have
\begin{eqnarray*}
\left\|\nabla{\tilde h}_{i,\mu}(x) \right\|
=\left\|\mathbb{E}_v \left[g_{i,v}\right]\right\|
\leq \mathbb{E}_v \left[\left\|g_{i,v}\right\|\right]
\leq \mathbb{E}_v \left[\left\|\partial h_i\left(x+ \mu v\right)\right\|\right]
\leq L_{0,i}:= M_i,
\end{eqnarray*}
provided that the assumptions  (\ref{L0uniform}) and (\ref{L0Gaussian}) hold, with
$\bf{H}(\cdot,\xi)$ being replaced by $h_i(\cdot)$ and  $L_0$ being replaced by  $L_{0,i}$, respectively.

Let $\tilde h_{i,\mu}$ be  constructed by the inf-conv smoothing technique as in (\ref{inf-conv-H}) by replacing ${\bf H}(\cdot,\xi)$ by $h_i(\cdot)$ and $\tilde{\bf{H}}_{\mu}(\cdot,\xi)$ by $\tilde{h}_{i,\mu}(\cdot)$, i.e.,
\begin{eqnarray*}
\tilde{h}_{i,\mu}(x):=\inf\limits_{y \in \mathbb{R}^{d}}\left\{h_i(y)+\mu \omega \left(\frac{x-y}{\mu}\right)\right\},
\end{eqnarray*}
and $\hat{v}_{\mu}(x,i)$ be a minimizer of the right-hand side of the above inf problem. As in (\ref{grad-inf-conv}), we find 
\begin{eqnarray*}
\left\|\nabla{\tilde h}_{i,\mu}(x) \right\|
=\left\|\nabla \omega\left(\frac{x-\hat{v}_{\mu}(x,i)}{\mu} \right)\right\|.
\end{eqnarray*}
The existence of an upper bound $M_i$ depends on the choice of $\omega$. 
\begin{itemize}
\item[-] It is clear that if $\|\nabla \omega(\cdot)\|$ is bounded on $\mathbb{R}^d$, then its upper bound can be specified as a constant $M_i$. For example, for $\omega(x)=\ln\left(\sum_{j=1}^d e^{x_j}\right)$ considered in (\ref{b-omega}), $\max_{x\in \mathbb{R}^d}\|\nabla \omega(x)\|\le 1$  as given in (\ref{gradient-omega}).
\item[-] If $\omega(x)=\frac{1}{2}\|x\|^2$ is adopted,
and $h_i$ is $L_{0,i}$-Lipschitz continuous on $\mathbb{R}^d$, we can choose $M_i:=L_{0,i}$  according to Lemma 3.3 of \cite{bohm2021variable}.
\end{itemize}


\begin{lemma}\label{lem3.211}
For $h$ in (\ref{Emax}), assume that for each $i\in \mathbb{I}_q$ and a.e. $\xi\in\Xi$, 
$\tilde h_{i,\mu}(\cdot,\xi)$ is a smoothing function of $h_i(\cdot,\xi)$ 
{with parameters $(\kappa_{i,\xi}, K_{i,\xi}, {\cal L}_{h_{i,\xi}})$.}
Let $\tilde h_{\mu}$ be the smoothing approximation defined in \eqref{inf-conv-Emax} based on $\tilde h_{i,\mu}$ and the inf-conv smoothing technique. 
{Assume also that for each $i\in\mathbb{I}_q$ and {a.e.} $\xi\in\Xi$, there exists a constant $M_{i,\xi} \ge 0$ irrelevant to $x\in X$ and $\mu\in (0,\bar{\mu}]$ such that 
	\begin{eqnarray}\label{Mixi}
		\max\limits_{x\in X,~\mu\in (0,\bar{\mu}]}\left\{\left\|\nabla \tilde{h}_{i,\mu}(x,\xi) \right\|\right\} \leq M_{i,\xi} < \infty.
\end{eqnarray}}
Then the following statements hold.
\begin{itemize}
	\item[(i)]
	The function ${\tilde h}_{\mu}$ is a smoothing function of $h$ satisfying Definition \ref{smoothingdefinition} with parameters $(\kappa,K,{\cal{L}}_h)$, where $\kappa = 
	\ln q + \mathbb{E}_{\xi}
	\left[
	\sqrt{\sum_{i=1}^q {\kappa_{i,\xi}^2}}
	\right]$,  
	$K=\sum_{i=1}^{q}\mathbb{E}_{\xi}
	\left[
	K_{i,\xi}
	\right]$, and  
	${\cal{L}}_{h} = \mathbb{E}_{\xi}
	\left[
	\left( \sum_{i=1}^{q}
	M_{i,\xi}\right)^2
	+\sum_{i=1}^{q}
	{\cal{L}}_{h_{i,\xi}}
	\right]   
	$.

	\item[(ii)] Assumption \ref{assumption 1} holds with $\zeta=(\xi^{T},i)^{T}
	$ and 
	$\sigma^2= \mathbb{E}_{\xi,i}
	\left[ {\left(M_{i,\xi}\right)^2}\right].$
\end{itemize}
\end{lemma}

\section{SSAG method}\label{sec:method}

In this section, we develop the SSAG method for \eqref{orip}. We will show the SSAG method achieves the best-known complexity bounds in terms of the $\mathcal{SFO}$ and iteration number among the SA methods using the first-order information.

At the $k$-th iterate, we randomly choose a mini-batch samples $$\zeta_{[m_k]}=\left\lbrace \zeta_{1,k},\ldots,\zeta_{m_k,k}\right\rbrace $$ of the random vector $\zeta\in \Xi$ following the given probability density, where $m_k$ is the batch size. We use $\mathcal{F}_k$ to denote all randomness occurred before the $(k+1)$-th iterate, i.e., $\mathcal{F}_k=\left\lbrace \zeta_{[m_1]},\ldots,\zeta_{[m_{k}]}\right\rbrace $.
For any $k\geq 1$, we denote the mini-batch stochastic gradients $\nabla \tilde{\mathbf{\Psi}}_{\mu_{k}}^k$ by $$\nabla \tilde{\mathbf{\Psi}}_{\mu_{k}}^k=\frac{1}{m_{k}}\sum_{i=1}^{m_{k}}\nabla \tilde{\mathbf{\Psi}}_{\mu_{k}}(x_k, \zeta_{i,k}).$$

The following lemma addresses the relations of $\nabla \tilde{\mathbf{\Psi}}_{\mu_{k}}^k$ to $\nabla \tilde{\psi}_{\mu_{k}}(x_k)$, which can be shown without difficulty using the arguments similar as those for Lemma 2 in \cite{ghadimi2016mini}.
\vskip 2mm
\begin{lemma}\label{lemma3.1} Under Assumption \ref{assumption 1}, we have for any $k\geq 1$ and $\mu_{k}>0$,
	\begin{eqnarray*}
		&(a)&~ \mathbb{E}_{\mathcal{F}_k}\left[\nabla \tilde{\mathbf{\Psi}}_{\mu_{k}}^k\right] = \mathbb{E}_{\mathcal{F}_k}\left[\nabla \tilde{\psi}_{\mu_{k}}(x_k)\right],\\
		&(b)&~ \mathbb{E}_{\mathcal{F}_k}\left[\left\| \nabla \tilde{\mathbf{\Psi}}_{\mu_{k}}^k - \nabla\tilde{\psi}_{\mu_{k}}(x_k) \right\|^2\right] \leq \frac{\sigma^2}{m_{k}}.
	\end{eqnarray*}
\end{lemma}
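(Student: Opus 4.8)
The plan is to reduce both claims to \cref{assumption 1} by conditioning on the history $\mathcal{F}_{k-1}$. The key structural observation is that the iterate $x_k$ is generated from past randomness and is therefore $\mathcal{F}_{k-1}$-measurable, while the fresh mini-batch $\xi_{1,k},\ldots,\xi_{m_k,k}$ is drawn i.i.d.\ from the given probability density and is independent of $\mathcal{F}_{k-1}$. Throughout I would use the tower property $\mathbb{E}[\,\cdot\,] = \mathbb{E}[\mathbb{E}[\,\cdot\mid\mathcal{F}_{k-1}]]$ to first freeze $x_k$ and then average out the batch.

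For part~(a), conditioning on $\mathcal{F}_{k-1}$ fixes $x_k$, so each summand satisfies $\mathbb{E}[\nabla\tilde{\mathbf{\Psi}}_{\mu_k}(x_k,\xi_{i,k})\mid\mathcal{F}_{k-1}] = \nabla\tilde{\psi}_{\mu_k}(x_k)$ by \cref{assumption 1}~a), applied at the running smoothing parameter $\mu_k\in(0,\bar\mu]$. Averaging the $m_k$ identical conditional means gives $\mathbb{E}[\nabla\tilde{\mathbf{\Psi}}_{\mu_k}^k\mid\mathcal{F}_{k-1}] = \nabla\tilde{\psi}_{\mu_k}(x_k)$, and taking the outer expectation yields claim~(a).

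For part~(b), I would introduce the per-sample errors $\delta_{i,k} := \nabla\tilde{\mathbf{\Psi}}_{\mu_k}(x_k,\xi_{i,k}) - \nabla\tilde{\psi}_{\mu_k}(x_k)$, so that $\nabla\tilde{\mathbf{\Psi}}_{\mu_k}^k - \nabla\tilde{\psi}_{\mu_k}(x_k) = \frac{1}{m_k}\sum_{i=1}^{m_k}\delta_{i,k}$. Expanding the squared Euclidean norm of this average into $m_k^2$ inner products $\langle\delta_{i,k},\delta_{j,k}\rangle$ and taking $\mathbb{E}[\,\cdot\mid\mathcal{F}_{k-1}]$, the diagonal terms $i=j$ are each bounded by $\sigma^2$ via \cref{assumption 1}~b), contributing at most $m_k\sigma^2/m_k^2 = \sigma^2/m_k$. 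The off-diagonal terms $i\neq j$ vanish: conditionally on $\mathcal{F}_{k-1}$ the samples $\xi_{i,k}$ and $\xi_{j,k}$ are independent, so $\mathbb{E}[\langle\delta_{i,k},\delta_{j,k}\rangle\mid\mathcal{F}_{k-1}] = \langle\mathbb{E}[\delta_{i,k}\mid\mathcal{F}_{k-1}],\mathbb{E}[\delta_{j,k}\mid\mathcal{F}_{k-1}]\rangle = 0$, since each conditional mean is zero by \cref{assumption 1}~a). Taking the outer expectation delivers the stated bound $\sigma^2/m_k$.

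The argument is essentially routine, mirroring Lemma~2 of \cite{ghadimi2016mini}; the only point deserving genuine care is the vanishing of the cross terms. This rests on making two facts explicit: first, that the batch samples are mutually independent and independent of $\mathcal{F}_{k-1}$ given $x_k$, which is what decouples the inner product into a product of conditional means; and second, that \cref{assumption 1} is postulated to hold for \emph{every} $\mu>0$, so that both the unbiasedness and the variance bound remain valid at the varying $\mu_k$ rather than at a single fixed smoothing parameter. I expect no substantive obstacle beyond bookkeeping these measurability and independence claims precisely.
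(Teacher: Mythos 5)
Your proposal is correct and follows essentially the same route as the paper, which proves this lemma by appealing to the standard mini-batch argument of Lemma~2 in \cite{ghadimi2016mini}: condition on $\mathcal{F}_{k-1}$ to freeze $x_k$, use \cref{assumption 1} at the running parameter $\mu_k$ for unbiasedness and the per-sample variance bound, and kill the cross terms via conditional independence of the batch samples. Your write-up simply makes explicit the measurability and independence bookkeeping that the paper leaves to the cited reference.
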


\vskip 1mm

Now we are ready to give the basic scheme of the SSAG method in Algorithm \ref{alg:SSAG}.
\begin{algorithm}
	\caption{Stochastic smoothing accelerated gradient (SSAG) method}
	\label{alg:SSAG}
	\begin{algorithmic}[1]
		\State{Given the iteration limit $N>0$, initial points $z_{0}=y_{0} \in X$, the batch sizes $\left\{m_k\right\}_{k\geq 1}$ with $m_k \geq 1$, the smoothing parameters $\left\{\mu_{k}\right\}_{k\geq 0}$ with $\mu_{k}>0$, and $\left\{\alpha_{k}\right\}_{k\geq 0}$ with $\alpha_{k}\in (0, 1]$, and positive sequences $\left\{\beta_k\right\}_{k\geq 1}$ and $\left\{\theta_k\right\}_{k\geq 1}$.
		}
		\For{$k=1,\ldots,N$}
		\State{Step 1. Set $x_{k}=\alpha_{k-1}z_{k-1}+(1-\alpha_{k-1})y_{k-1}.$}
		\State{Step 2. Call the $\mathcal{SFO}~~m_k$ times to obtain $\nabla \tilde{\mathbf{\Psi}}_{\mu_{k}}(x_{k}, \zeta_{i,k})$, $i=1,\ldots, m_{k}$, and compute $\nabla \tilde{\mathbf{\Psi}}_{\mu_{k}}^{k}=\frac{1}{m_{k}}\sum\limits_{i=1}^{m_{k}}\nabla \tilde{\mathbf{\Psi}}_{\mu_{k}}(x_{k}, \zeta_{i,k})$.}
		\State{Step 3. Find
			$y_{k}={\rm{argmin}}_{y\in X}\left\lbrace \left\langle \nabla \tilde{\mathbf{\Psi}}_{\mu_{k}}^{k},y-x_{k}\right\rangle+\frac{\beta_k}{2}\|y-x_{k}\|^2\right\rbrace .$}
		\State{Step 4. Find
			$z_{k}={\rm{argmin}}_{x\in X}\left\lbrace \left\langle\nabla\tilde{\mathbf{\Psi}}_{\mu_{k}}^{k},x-x_{k}\right\rangle+\frac{\theta_{k}}{2}\|x-z_{k-1}\|^2\right\rbrace .$}
		\EndFor\\
		
		\Return $y_N$
	\end{algorithmic}
\end{algorithm}

We now specify the algorithmic parameters of the SSAG method. {For simplicity, we denote $$L_{\mu_{k}}:=L_{\tilde{\psi}_{\mu_{k}}} = K + \frac{\cal{L}_{\psi}}{\mu_k}$$ for}  the smoothness parameter of $\tilde{\psi}_{\mu_{k}}(x)$. For given ${\hat{\mu}}>0$ and $k\ge 1$,
\begin{eqnarray}
	& &	\alpha_0=1,~~~~~\frac{1-\alpha_{k}}{\alpha_{k}^2}=\frac{1}{\alpha_{k-1}^2},~~~~~
	{\mu_{k}= \frac{\hat{\mu}}{k}}
	,\label{alphak}\\
	& &	
	{\beta_{k} = L_{\mu_{k}}}+ {\frac{1}{\alpha_{k-1}}}, \label{betak}\\
	& &	{{\theta_{k}=2\alpha_{k-1}\beta_{k}}. }\label{thetak}
\end{eqnarray} 

The sequence $\{\alpha_k\}$ is defined in \eqref{alphak}, the same as that in Duchi's work \cite{duchi2012randomized}. 
{It is easy to find that $\{\alpha_k\}$ is monotonically decreasing, since  
	\begin{eqnarray*}
		\alpha_k = \frac{-\alpha_{k-1}^2 + \alpha_{k-1}^2 \sqrt{1 + \frac{4}{\alpha_{k-1}^2}}}{2} < \frac{-\alpha_{k-1}^2 + \alpha_{k-1}^2 (1+ \frac{2}{\alpha_{k-1}})}{2} = \alpha_{k-1}.
\end{eqnarray*}}
%
According to {Lemma 4.3 of \cite{duchi2012randomized}} and by simple mathematical induction,
\begin{eqnarray}\label{alphak_p1}
	{\frac{1}{2(k+2)} \le } \alpha_{k} \leq \frac{2}{k+2},\quad \mbox{for}\  k=0,1,2,\ldots.
\end{eqnarray}
The sequence $\{\beta_k\}_{k\ge 1}$ with $\beta_k$ defined in \eqref{betak} is a nondecreasing sequence that satisfies
$\beta_{k}-L_{\mu_{k}}>0.$
For $k\ge 1$, the parameter $\theta_k$ is set in
\eqref{thetak}. They are designed for the later theoretical analysis. 

Below we provide several technique lemmas for developing complexity results. Recall that $x^{\rm{opt}}$ refers to an arbitrary optimal solution of \eqref{orip} 

{that is bounded}. For ease of notation, we denote by 
\begin{eqnarray}\label{deltaV_muk}
	\delta_{\mu_{k}}(x_k):=\nabla\tilde{\mathbf{\Psi}}_{\mu_{k}}^k-\nabla\tilde{\psi}_{\mu_{k}}(x_k),\ 
	V_{\mu_k}:=\tilde{\psi}_{\mu_{k}}(y_{k})-{\tilde{\psi}_{\mu_k}(x^{\rm opt})},
\end{eqnarray}
and
\begin{eqnarray}\label{Pk}
	P_{k}:=2\kappa(1-\alpha_{k-1})\left(\mu_{k-1}-\mu_{k}\right). 
\end{eqnarray}
%
By Definition \ref{smoothingdefinition}, we can easily obtain that for any $k\geq 1$ and any $x,y_k\in X$,
\begin{eqnarray}
	\psi(y_k)-\psi(x)&\leq& \tilde{\psi}_{\mu_{k}}(y_k)-\tilde{\psi}_{\mu_{k}}(x)+2\kappa\mu_{k},\label{psibound}\\
	\tilde{\psi}_{\mu_{k+1}}(y_k)-\tilde{\psi}_{\mu_{k+1}}(x)&\leq&\tilde{\psi}_{\mu_{k}}(y_k)-\tilde{\psi}_{\mu_{k}}(x)+2\kappa\left(\mu_{k}-\mu_{k+1}\right).\label{psimurelation}
\end{eqnarray}

Lemma \ref{lemma3.41} below is a special case of Lemma 2 in \cite{ghadimi2012optimal}.

\vskip 2mm
\begin{lemma}
	\label{lemma3.41}
	Let the convex function $q: X \rightarrow \mathbb{R}$, the point $\tilde{x}\in X$, and the scalar $\nu\geq 0$ be given. 
	If $x^{\sharp} \in \mathop{\rm{argmin}}_{x \in X}\left\{ q(x)+ \frac{\nu}{2} \left\|\tilde{x}-x\right\|^2 \right\}$, then for any $x\in X$, we have
	\begin{eqnarray*}
		q(x^{\sharp})+ \frac{\nu}{2} \left\|\tilde{x}-  x^{\sharp}\right\|^2 \leq q(x)+ \frac{\nu}{2} \left\|\tilde{x}- x\right\|^2 - \frac{\nu}{2} \left\|x^{\sharp}-x\right\|^2.
	\end{eqnarray*}
\end{lemma}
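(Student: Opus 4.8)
The plan is to exploit that the objective $\phi(x):=q(x)+\frac{\nu}{2}\|\tilde x-x\|^2$ is $\nu$-strongly convex and that $x^\sharp$ minimizes it over the convex set $X$; the claimed inequality is exactly the strong-convexity estimate comparing the optimal value to the value at an arbitrary feasible $x$. Since $q$ is only assumed convex (not differentiable), I would avoid subgradient calculus and instead argue along line segments, which keeps the derivation elementary and self-contained.

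First I would derive a variational inequality. Fix $x\in X$. Because $X$ is convex, $x^\sharp+t(x-x^\sharp)\in X$ for every $t\in(0,1]$, so minimality of $x^\sharp$ gives $\phi(x^\sharp)\le \phi\bigl(x^\sharp+t(x-x^\sharp)\bigr)$. On the right-hand side I would bound the $q$-term by convexity, $q\bigl(x^\sharp+t(x-x^\sharp)\bigr)\le (1-t)q(x^\sharp)+t\,q(x)$, and expand the quadratic term exactly via $\|\tilde x-x^\sharp-t(x-x^\sharp)\|^2=\|\tilde x-x^\sharp\|^2-2t\langle \tilde x-x^\sharp,\,x-x^\sharp\rangle+t^2\|x-x^\sharp\|^2$. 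After cancelling the common $\frac{\nu}{2}\|\tilde x-x^\sharp\|^2$ term and the $q(x^\sharp)$ contribution, dividing by $t>0$, and letting $t\downarrow 0$ so that the $O(t)$ remainder vanishes, I obtain
\begin{eqnarray*}
q(x^\sharp)\le q(x)-\nu\langle \tilde x-x^\sharp,\,x-x^\sharp\rangle .
\end{eqnarray*}

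Finally I would convert the inner product into squared norms through the identity $\|\tilde x-x\|^2-\|\tilde x-x^\sharp\|^2-\|x^\sharp-x\|^2=-2\langle \tilde x-x^\sharp,\,x-x^\sharp\rangle$, which follows by expanding $\tilde x-x=(\tilde x-x^\sharp)+(x^\sharp-x)$. Multiplying this identity by $\frac{\nu}{2}$ and substituting into the variational inequality yields precisely
\begin{eqnarray*}
q(x^\sharp)+\frac{\nu}{2}\|\tilde x-x^\sharp\|^2\le q(x)+\frac{\nu}{2}\|\tilde x-x\|^2-\frac{\nu}{2}\|x^\sharp-x\|^2,
\end{eqnarray*}
as claimed. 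There is no serious obstacle here: the statement is a standard three-point prox inequality and could equally be quoted as a special case of Lemma 2 of \cite{ghadimi2012optimal}. The only point that requires a little care is that $q$ is nondifferentiable, which the segment argument (rather than a gradient-based optimality condition) handles cleanly.
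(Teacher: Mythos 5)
Your proof is correct: the segment argument cleanly yields the variational inequality $q(x^\sharp)\le q(x)-\nu\langle \tilde x-x^\sharp,\,x-x^\sharp\rangle$, and the three-point identity then gives exactly the claimed inequality (the case $\nu=0$ also goes through trivially). However, your route differs from the paper's: the paper offers no proof at all, but simply observes that the lemma is a special case of Lemma 2 of Ghadimi and Lan \cite{ghadimi2012optimal} (that lemma concerns composite prox-mappings with a general strongly convex prox-term, of which the statement here is the Euclidean instance with $q$ playing the role of the linearized objective). What your approach buys is self-containedness: you avoid importing an external result, and your line-segment argument sidesteps subgradient optimality conditions entirely, which is the one technical point that needs care since $q$ is merely convex and possibly nondifferentiable. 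What the paper's approach buys is brevity and a pointer to the more general statement (arbitrary Bregman-type prox functions), which is why citation is the standard practice for this three-point inequality. You correctly noted this equivalence yourself at the end of your write-up, so the two routes are consistent; yours is simply the expanded, from-scratch version of what the paper delegates to the literature.
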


\vskip 2mm
\begin{lemma}\label{lemma3.4}
	For any $k\geq 1$,
	{\begin{eqnarray}\label{pu}
			& & \quad\quad \left\langle\nabla\tilde{\mathbf{\Psi}}_{\mu_{k}}^{k},y_{k}-\alpha_{k-1}x^{\rm{opt}}-(1-\alpha_{k-1})y_{k-1}\right\rangle
			\leq -\frac{\beta_{k}}{2}\|y_{k}-x_{k}\|^2 \\ \nonumber
			& & \quad\quad\quad  + \frac{\theta_k \alpha_{k-1}}{2} (\|z_{k-1} - x^{\rm opt}\|^2 -\|z_k - x^{\rm opt}\|^2 ) - \frac{\alpha_{k-1}^2 \beta_k
			}{2}\|z_k - z_{k-1}\|^2.
	\end{eqnarray}}
\end{lemma}
\begin{proof}
	By the definitions of $z_{k}$ and $y_{k}$ in Algorithm \ref{alg:SSAG}, and Lemma \ref{lemma3.41} with $q(x):= \langle \nabla\tilde{\mathbf{\Psi}}_{\mu_{k}}^{k}, x-x_k \rangle$, we have
	\begin{eqnarray}\label{z}
		\quad\quad\quad \left\langle\nabla\tilde{\mathbf{\Psi}}_{\mu_{k}}^{k},z_{k}-x^{\rm opt}\right\rangle\leq \frac{\theta_{k}}{2}\left(\|z_{k-1} - x^{\rm opt}\|^2
		-\|z_k-x^{\rm opt}\|^2 - \|z_k - z_{k-1}\|^2\right),
	\end{eqnarray}
	and
	\begin{eqnarray*}
		\left\langle\nabla\tilde{\mathbf{\Psi}}_{\mu_{k}}^{k},y_{k}-x\right\rangle
		\leq\frac{\beta_{k}}{2}\left(\|x_{k}-x\|^2-\|y_{k}-x\|^2-\|y_{k}-x_{k}\|^2\right).
	\end{eqnarray*}
	By choosing $x=\alpha_{k-1}z_{k}+(1-\alpha_{k-1})y_{k-1}\in X$ in the above inequality, we obtain
	\begin{eqnarray*}
		&&\left\langle\nabla\tilde{\mathbf{\Psi}}_{\mu_{k}}^{k},y_{k}-\alpha_{k-1}z_{k}-(1-\alpha_{k-1})y_{k-1}\right\rangle\\
		&\leq& \frac{\beta_{k}}{2}\left(\|x_{k}-\alpha_{k-1}z_{k}-(1-\alpha_{k-1})y_{k-1}\|^2-\|y_{k}-x_{k}\|^2\right)\\
		&=& \frac{\beta_{k}}{2}\left(\alpha_{k-1}^2\|z_{k-1}-z_{k}\|^2-\|y_{k}-x_{k}\|^2\right),
	\end{eqnarray*}
	where the last equality follows from  $x_k=\alpha_{k-1}z_{k-1}+(1-\alpha_{k-1})y_{k-1}$ according to  Step 1 of Algorithm \ref{alg:SSAG}.
	Multiplying both sides of \eqref{z} by $\alpha_{k-1}$, then summing it to the above inequality, and using $\theta_k = 2 \alpha_{k-1} \beta_k$ in (\ref{thetak}), we have (\ref{pu}) as desired.
\end{proof}

\vskip 2mm

\begin{lemma}\label{lemma3.5}
	For any $k\geq 1$, we have
	{
		\begin{eqnarray*}
		V_{\mu_k} 
		&\le&  (1-\alpha_{k-1}) 
		\left( 
		\tilde \psi_{\mu_k}(y_{k-1}) 
		- {\tilde{\psi}_{\mu_k}(x^{\rm opt})} 
		\right)
		+\frac{\theta_k \alpha_{k-1}}{2} (\|z_{k-1} - x^{\rm opt} \|^2 -\|z_k - x^{\rm opt}\|^2) \nonumber\\
		& & \nonumber \quad  +\frac{\|\delta_{\mu_k}(x_{k})\|^2}{2(\beta_{k}-L_{\mu_{k}})}
		+\alpha_{k-1}\left\langle \delta_{\mu_{k}}(x_{k}),x^{\rm opt}-z_{k-1}\right\rangle.
\end{eqnarray*}}
\end{lemma}
\vskip 1mm
\begin{proof}
By the convexity of $\tilde{\psi}_{\mu_{k}}$, we have for any $x\in X$,
\begin{eqnarray}\label{convex1}
	&&\tilde{\psi}_{\mu_{k}}(x_{k})\leq \tilde{\psi}_{\mu_{k}}(x)-\left\langle\nabla\tilde{\psi}_{\mu_{k}}(x_{k}),x-x_{k}\right\rangle.
\end{eqnarray}
Since $\tilde{\psi}_{\mu_{k}}$ is an $L_{\mu_{k}}$-smooth function over $X$, by setting $x=y_{k-1}$ {and $x=x^{\rm{opt}}$} respectively in \eqref{convex1}, we have
\begin{eqnarray*}
	&&\tilde{\psi}_{\mu_{k}}(y_{k})\nonumber\\
	&
	\overset{\eqref{psiLsmoo}}{\leq}&\tilde{\psi}_{\mu_{k}}(x_{k})+\left\langle\nabla\tilde{\psi}_{\mu_{k}}(x_{k}),y_{k}-x_{k}\right\rangle+\frac{L_{\mu_{k}}}{2}\|y_{k}-x_{k}\|^2\nonumber\\
	&=&(1-\alpha_{k-1})\tilde{\psi}_{\mu_{k}}(x_{k})+\alpha_{k-1}\tilde{\psi}_{\mu_{k}}(x_{k})+\left\langle\nabla\tilde{\psi}_{\mu_{k}}(x_{k}),y_{k}-x_{k}\right\rangle+\frac{L_{\mu_{k}}}{2}\|y_{k}-x_{k}\|^2\nonumber\\
	&\overset{\eqref{convex1}}{\leq}&(1-\alpha_{k-1})\tilde{\psi}_{\mu_{k}}(y_{k-1})
	-(1-\alpha_{k-1})\left\langle\nabla\tilde{\psi}_{\mu_{k}}(x_{k}),y_{k-1}-x_{k}\right\rangle+\alpha_{k-1}\tilde{\psi}_{\mu_{k}}(x^{\rm opt})\nonumber\\
	&&~~~~~~~~~		-\alpha_{k-1}\left\langle\nabla\tilde{\psi}_{\mu_{k}}(x_{k}),x^{\rm opt}-x_{k}\right\rangle+\left\langle\nabla\tilde{\psi}_{\mu_{k}}(x_{k}),y_{k}-x_{k}\right\rangle+\frac{L_{\mu_{k}}}{2}\|y_{k}-x_{k}\|^2.
\end{eqnarray*}

Subtracting 
{$\tilde{\psi}_{\mu_{k}}(x^{\rm opt}) + (1-\alpha_{k-1}) \left( \tilde \psi_{\mu_k}(y_{k-1}) - \tilde \psi_{\mu_k}(x^{\rm opt}) \right) $} from both sides of the above inequality, 
we get
\begin{eqnarray}\label{11-1}
	& &\tilde \psi_{\mu_k}(y_k) - \tilde \psi_{\mu_k}(x^{\rm opt}) 
	-  (1-\alpha_{k-1}) \left( \tilde \psi_{\mu_k}(y_{k-1}) - \tilde \psi_{\mu_k}(x^{\rm opt}) \right)  \\
	& \leq &
	\left\langle\nabla\tilde{\psi}_{\mu_{k}}(x_{k}),y_{k}-\alpha_{k-1}x^{\rm opt}-(1-\alpha_{k-1})y_{k-1}\right\rangle+\frac{L_{\mu_{k}}}{2}\|y_{k}-x_{k}\|^2\nonumber\\
	&=&\left\langle\nabla\tilde{\mathbf{\Psi}}_{\mu_{k}}^{k}-\delta_{\mu_k}(x_{k}),y_{k}-\alpha_{k-1}x^{\rm opt}-(1-\alpha_{k-1})y_{k-1}\right\rangle
	+\frac{L_{\mu_{k}}}{2}\|y_{k}-x_{k}\|^2\nonumber\\
	&\overset{\eqref{pu}}{\leq}& { \frac{\theta_k \alpha_{k-1}}{2} (\|z_{k-1} - x^{\rm opt} \|^2 -\|z_k - x^{\rm opt}\|^2)} -\frac{\beta_{k}-L_{\mu_{k}}}{2}\|y_{k}-x_{k}\|^2 
	\nonumber\\
	& & \quad\quad 
	+\left\langle \delta_{\mu_k}(x_{k}),x_{k}-y_{k}\right\rangle -\alpha_{k-1}\left\langle \delta_{\mu_k}(x_{k}),z_{k-1}-x^{\rm opt}\right\rangle,
\end{eqnarray}
where the last inequality is based on $x_{k}=\alpha_{k-1}z_{k-1}+(1-\alpha_{k-1})y_{k-1}$ in Step 1 of Algorithm \ref{alg:SSAG}. It is clear that $\left\langle \delta_{\mu_k}(x_{k}),x_{k}-y_{k}\right\rangle \le \|\delta_{\mu_k}(x_{k})\| \|x_{k}-y_{k}\|$, and $-\frac{b_2b_3^2}{2}+b_1b_3\leq\frac{b_1^2}{2b_2}$ for any nonnegative real numbers $b_1$, $b_3$, and positive real number  $b_2$. Then by setting $b_1=\|\delta_{\mu_k}(x_{k})\|$, $b_3=\|x_{k}-y_{k}\|$, and $b_2 =\beta_{k}-L_{\mu_{k}}$ that is positive according to \eqref{betak}, we have
\begin{eqnarray}\label{11-2}
	& & -\frac{\beta_{k}-L_{\mu_{k}}}{2}\|y_{k}-x_{k}\|^2
	+\left\langle \delta_{\mu_k}(x_{k}),x_{k}-y_{k}\right\rangle
	\leq\frac{\|\delta_{\mu_k}(x_{k})\|^2}{2(\beta_{k}-L_{\mu_{k}})}.
\end{eqnarray}
In view of \eqref{11-1} and \eqref{11-2}, it follows that Lemma \ref{lemma3.5} 
holds.
\end{proof}
\vskip 3mm

\vskip 1mm
Theorem \ref{prop3.2} below considers the case that the SSAG adopts the diminishing smoothing parameter, 
and provides an upper bound of the expectation of difference between the objective value at computed solutions and the optimal objective value.
\vskip 1mm
\begin{theorem}\label{prop3.2}
{Let $\{\mu_{k}\}_{k\geq 0}$, $\{\alpha_{k}\}_{k\geq 0}$, $\{\beta_{k}\}_{k\geq 1}$ and $\{\theta_{k}\}_{k\geq 1}$ be defined in \eqref{alphak}--\eqref{thetak} for Algorithm \ref{alg:SSAG}. Let $m_k=k$ for all $k\ge 1$. Suppose Assumption \ref{assumption 1} holds. }
Then for any $N\geq1$,  
\begin{eqnarray}\label{ineq-new-1}
	\mathbb{E}_{{\cal F}_N}[\psi(y_N) - \psi(x^{\rm opt})] \le \frac{d_1}{N^2} + \frac{d_2}{N},
\end{eqnarray}
where $d_1$ and $d_2$ are constants defined by 
\begin{eqnarray*}
	d_1&:=&{\frac{4 V_{\mu_1}}{\alpha_0^2} + 4(K+2)\|z_1 - x^{\rm opt}\|^2}\\
	d_2 &:=& { (32 \kappa \hat \mu + 4 \sigma^2)(1+\frac{1}{e}) + 2\kappa \hat \mu + 4\left(\frac{{\cal{L}}_{\psi}}{\hat \mu} + 2\right) \|z_1 - x^{\rm opt}\|^2}.
\end{eqnarray*}
\end{theorem}
\begin{proof}
{Using the definition of smoothing function, we know that 
	\begin{eqnarray*}
		\tilde \psi_{\mu_k}(y_{k-1}) - \tilde \psi_{\mu_k}(x^{\rm opt}) \le \tilde \psi_{\mu_{k-1}}(y_{k-1}) - \tilde \psi_{\mu_{k-1}}(x^{\rm opt}) + 2 \kappa (\mu_{k-1} - \mu_k).
	\end{eqnarray*}
	This, combined with Lemma \ref{lemma3.5}, yields that 
	\begin{eqnarray}\label{star}
		V_{\mu_k} &\le& (1- \alpha_{k-1}) V_{\mu_{k-1}} + P_k + \frac{\theta_k \alpha_{k-1}}{2}(\|z_{k-1} - x^{\rm opt}\|^2 - \|z_k - x^{\rm opt}\|^2) \nonumber\\
		& & \quad + \frac{\|\delta_{\mu_k}(x_k)\|^2}{2(\beta_k - L_{\mu_k})} + \alpha_{k-1} \langle \delta_{\mu_k}(x_k), x^{\rm opt} - z_{k-1} \rangle.
	\end{eqnarray}
	Dividing both sides of the above inequality by $\alpha_{k-1}^2$ and using $\frac{1-\alpha_{k-1}}{\alpha_{k-1}^2} = \frac{1}{\alpha_{k-2}^2}$, we get }
\begin{eqnarray}
	\frac{V_{\mu_k}}{\alpha_{k-1}^2} &\le& \frac{V_{\mu_{k-1}}}{\alpha_{k-2}^2} + \frac{P_k}{\alpha_{k-1}^2} + \frac{\theta_k}{2 \alpha_{k-1}} (\|z_{k-1} - x^{\rm opt}\|^2 - \|z_k - x^{\rm opt}\|^2) \nonumber\\
	& &\quad \frac{\|\delta_{\mu_k}(x_k)\|^2}{2(\beta_k - L_{\mu_k})\alpha_{k-1}^2} + \frac{1}{\alpha_{k-1}} \langle \delta_{\mu_k}(x_k), x^{\rm opt} - z_{k-1} \rangle \nonumber\\
	& \le &  \cdots \nonumber\\
	& \le & \frac{V_{\mu_1}}{\alpha_0^2} + \sum_{i=2}^k \frac{P_i}{\alpha_{i-1}^2} + \sum_{i=2}^k \frac{\theta_i}{ 2 \alpha_{i-1}}(\|z_{i-1} - x^{\rm opt}\|^2 - \|z_i - x^{\rm opt}\|^2) \nonumber \\
	& & \quad + \sum_{i=2}^k \frac{\|\delta_{\mu_i}(x_i)\|^2}{2 (\beta_i - L_{\mu_i}) \alpha_{i-1}^2} + \sum_{i=2}^k \frac{1}{\alpha_{i-1}} \langle \delta_{\mu_i}(x_i), x^{\rm opt} - z_{i-1} \rangle.  \label{Vmukk-1}
\end{eqnarray}

Now we derive upper bounds for $\alpha_{k-1}^2$ times each of the first three terms in the right-hand side of (\ref{Vmukk-1}).  In view of $\alpha_{k-1} \le \frac{2}{k+1}$, it follows that 
\begin{eqnarray}\label{item1}
	\alpha_{k-1}^2\frac{V_{\mu_1}}{\alpha_0^2} \le \frac{4 V_{\mu_1}}{(k+1)^2 \alpha_0^2}.
\end{eqnarray}
By using $\alpha_{i-2} \ge \frac{1}{2i}$ according to (\ref{alphak_p1}), we have 
\begin{eqnarray*}
	\sum_{i=2}^k \frac{P_i}{\alpha_{i-1}^2} &=& \sum_{i=2}^k \frac{2 \kappa(1-\alpha_{i-1})}{\alpha_{i-1}^2}  (\mu_{i-1}-\mu_i)\\
	&=& 2 \kappa \hat \mu \sum_{i=2}^k \frac{1}{\alpha_{i-2}^2}\left(\frac{1}{i-1} - \frac{1}{i} \right)\\
	&\le& 8 \kappa \hat \mu \sum_{i=2}^k i^2 \left(\frac{1}{i-1} - \frac{1}{i} \right)\\ 
	&=& 8 \kappa \hat \mu \sum_{i=2}^k
	\left(1+\frac{1}{i-1}\right)\\
	&\le& 8 \kappa \hat \mu [k-1 + \ln(k-1)],
\end{eqnarray*}
where the last inequality is obtained since 
\begin{eqnarray*}
	\sum_{i=2}^k \frac{1}{i-1} \le \int_{1}^k \frac{1}{x-1} dx = \ln(k-1).
\end{eqnarray*}
By direct computation, $\left(\frac{\ln t}{t}\right)' = \frac{1-\ln t}{t^2}$. Consequently $\left(\frac{\ln t}{t}\right)' \ge 0$ for $t\in (0,e]$ and $\left(\frac{\ln t}{t}\right)' <0$ for $t\in (e,\infty)$,  which indicates that $\frac{\ln t}{t}$ achieves the global maximum at $t=e$ on the interval $(0,\infty)$, i.e.,
\begin{eqnarray*}
	\frac{\ln t}{t}  \le \frac{1}{e}\quad \mbox{for any}\ t\in (0,\infty).
\end{eqnarray*}
This, together with $\alpha_{k-1} \le \frac{2}{k+1}$, yields that 
\begin{eqnarray}\label{1}
	\alpha_{k-1}^2 \sum_{i=2}^k \frac{P_i}{\alpha_{i-1}^2} &\le& \left( \frac{2}{k+1}\right)^2 8 \kappa \hat \mu [k-1 + \ln(k-1)]  \nonumber \\
	&\le & 32 \kappa \hat \mu \left[\frac{1}{k+1} + \frac{\ln(k+1)}{k+1} \frac{1}{k+1} \right] \nonumber\\
	&\le& \frac{32 \kappa \hat \mu \left(1+ \frac{1}{e}\right)}{k+1}. 
\end{eqnarray}
Furthermore, by using $\alpha_{i-1} \ge \frac{1}{2(i+1)}$ in (\ref{alphak_p1}), $\theta_i = 2 \alpha_{i-1} \beta_i$ in (\ref{thetak}),  we have 
\begin{eqnarray*}
	\frac{\theta_i}{2 \alpha_{i-1}} &=& \frac{2 \alpha_{i-1}\beta_i}{2 \alpha_{i-1}} = \beta_i = L_{\mu_i} + \frac{1}{\alpha_{i-1}} = K+ \frac{{\cal L}_{\psi}}{\mu_i} + \frac{1}{\alpha_{i-1}}\\
	&\le& K + \frac{{\cal L}_{\psi}}{\hat \mu} i + 2(i+1)\\
	&=& K+2 + \left(\frac{{\cal L}_{\psi}}{\hat \mu} +2  \right) i \\
	&\le& K+2 + \left(\frac{{\cal L}_{\psi}}{\hat \mu} +2  \right) k\quad \mbox{for all}\ i\le k.
\end{eqnarray*}
It follows that 
\begin{eqnarray*}
	& & \sum_{i=2}^k \frac{\theta_i}{2 \alpha_{i-1}} (\|z_{i-1}-x^{\rm opt}\|^2 - \|z_i - x^{\rm opt}\|^2)\\
	&\le& \left[K+2 + \left(\frac{{\cal L}_\psi}{\hat \mu} + 2\right) k \right] \sum_{i=2}^k (\|z_{i-1}-x^{\rm opt}\|^2 - \|z_i - x^{\rm opt}\|^2)\\
	&\le & \left[K+2 + \left(\frac{{\cal L}_\psi}{\hat \mu} + 2\right) k \right] \|z_1 - x^{\rm opt}\|^2.  
\end{eqnarray*}
Therefore, we find
\begin{eqnarray} \label{2}
	& &\alpha_{k-1}^2 \sum_{i=2}^k \frac{\theta_i}{2 \alpha_{i-1}} (\|z_{i-1}-x^{\rm opt}\|^2 - \|z_i - x^{\rm opt}\|^2) \nonumber\\ 
	&\le& \left(\frac{2}{k+1} \right)^2 \left[K+2 + \left(\frac{{\cal L}_\psi}{\hat \mu} + 2\right) k \right] \|z_1 - x^{\rm opt}\|^2 \nonumber\\
	&\le& \quad \frac{4(K+2)\|z_1 - x^{\rm opt}\|^2}{(k+1)^2} + \frac{4 \left(\frac{{\cal L}_{\psi}}{\hat \mu} +2 \right) \|z_1 - x^{\rm opt}\|^2}{k+1}.
\end{eqnarray}

Let us denote the sum of the right-hand sides of (\ref{item1}), (\ref{1}) and (\ref{2}) by $A_k$.
Multiplying both sides of \eqref{Vmukk-1} by $\alpha_{k-1}^2$ and taking the expectation on both sides with respect to ${\cal F}_k$, we get 
\begin{eqnarray}\label{main-ineq}
	{\mathbb{E}_{{\cal{F}}_k}[V_{\mu_k}]} 
	&\le&   
	A_k + {\alpha_{k-1}^2} \mathbb{E}_{{\cal F}_k} \left[\sum_{i=2}^k 
	\frac{
		\|\delta_{\mu_i}(x_i)\|^2}{2 (\beta_i - L_{\mu_i}) \alpha_{i-1}^2}\right] \nonumber \\
	& & \quad   +
	{\alpha_{k-1}^2}\mathbb{E}_{{\cal F}_k} \left[\sum_{i=2}^k \frac{1}{\alpha_{i-1}} \langle \delta_{\mu_i}(x_i), x^{\rm opt} - z_{i-1} \rangle \right].
\end{eqnarray}

{
	By Lemma \ref{lemma3.1} (b), 
	$\beta_i - L_{\mu_i} = \frac{1}{\alpha_{i-1}}$,
	{$m_i = i$} and $\alpha_{i-1} \ge \frac{1}{2(i+1)}$, we have 
	\begin{eqnarray*}
		\mathbb{E}_{{\cal{F}}_k} \left[ \sum_{i=2}^k \frac{\|\delta_{\mu_i}(x_i)\|^2}{2(\beta_i - L_{\mu_i}) \alpha_{i-1}^2} \right] 
		&=& \sum_{i=2}^k \frac{\mathbb{E}_{{\cal F}_k}[\|\delta_{\mu_i} (x_i)\|^2]}{2(\beta_i - L_{\mu_i}) \alpha_{i-1}^2} \\
		&\le&   \frac{\sigma^2}{2} \sum_{i=2}^k \frac{1}{\alpha_{i-1} m_i} \\ 
		&\le& \sigma^2 \sum_{i=2}^k \frac{i+1}{i} \\
		&\le& \sigma^2 \left[k-1+ \sum_{i=2}^k \frac{1}{i} \right] \\
		&\le & \sigma^2(k-1+ \ln k).  
	\end{eqnarray*}
}
It is then easy to derive that 
\begin{eqnarray}\label{sun}
	\alpha_{k-1}^2 \mathbb{E}_{{\cal{F}}_k} \left[ \sum_{i=2}^k \frac{\|\delta_{\mu_i}(x_i)\|^2}{2(\beta_i - L_{\mu_i}) \alpha_{i-1}^2} \right] &\le& \left(\frac{2}{k+1} \right)^2 \sigma^2[(k-1) + \ln k] \nonumber\\
	&\le& 4 \sigma^2 \left(\frac{1}{k+1} + \frac{\ln k}{k+1} \frac{1}{k+1} \right) \nonumber\\
	&\le &  \frac{4 \sigma^2 (1+\frac{1}{e})}{k+1}.
\end{eqnarray}

Since $x^{\rm opt}$, $z_{i-1}=z_{i-1}(\mathcal{F}_{i-1})$ and $x_{i}=x_{i}(\mathcal{F}_{i-1})$ for any $i\le k$ are deterministic if $\mathcal{F}_{k}$ is given, we have by the definition of $\delta_{\mu_k}(x_k)$ in (\ref{deltaV_muk}), Lemma \ref{lemma3.1} (a), and by the similar arguments in the proof of Lemma 3 of \cite{wang2022stochastic},
\begin{eqnarray}\label{delta0}
	\mathbb{E}_{\mathcal{F}_{k}}\left[\left\langle \delta_{\mu_i}(x_{i}),x^{\rm opt}-z_{i-1}\right\rangle\right]
	=0,\quad \forall \ i\le k.
\end{eqnarray}

Setting $k=N$ in (\ref{main-ineq}), employing \eqref{deltaV_muk}, (\ref{sun}), (\ref{delta0}), and the definition of $A_k$, and using the fact \eqref{psibound} that 
\begin{eqnarray*}
	\psi(y_N) - \psi(x^{\rm opt}) \le V_{\mu_N} + 2 \kappa \mu_N,
\end{eqnarray*}
we find 
\begin{eqnarray}
	& & \mathbb{E}_{{\cal F}_N}[\psi(y_N) - \psi(x^{\rm opt})] 
	\le \mathbb{E}_{{\cal F}_N} [V_{\mu_N}] + 2 \kappa \mu_N \nonumber\\
	&\le& \left(\frac{2}{N+1}\right)^2 \left(\frac{V_{\mu_1}}{\alpha_0^2}  \right) + \frac{32\kappa \hat \mu(1+\frac{1}{e})}{N+1}  + \frac{4(K+2)\|z_1 - x^{\rm opt}\|^2}{(N+1)^2} \nonumber\\
	& & \quad + \frac{4(\frac{L_{\psi}}{\hat \mu} + 2)\|z_1 -x^{\rm opt}\|^2}{N+1} + \frac{4\sigma^2(1+\frac{1}{e})}{N+1} + \frac{2\kappa \hat \mu}{N}\label{pp}\\
	&\le & \frac{d_1}{N^2} + \frac{d_2}{N},\nonumber
\end{eqnarray}
as we desired.

\end{proof}

{ Theorem \ref{prop3.2} guarantees asymptotic convergence  of the SSAG method that adopts the diminishing smoothing parameter and variable mini-batch size of samples.
{ Corollary \ref{corbatchk}  below shows that our SSAG method can simultaneously own the iteration complexity   ${\cal O}(\frac{1}{\epsilon})$, and the $\cal{SFO}$ complexity  ${\cal O}(\frac{1}{\epsilon^2})$ to get an $\epsilon$-approximate solution of the original problem \eqref{orip}.}}
\vskip 1mm


\vskip 1mm 

\vspace{2mm}
\begin{corollary}\label{corbatchk}
Suppose that all the conditions in Theorem \ref{prop3.2} are satisfied and the constants $d_1$ and $d_2$ are defined as in Theorem \ref{prop3.2}. 
Then the iteration complexity of finding an $\epsilon$-approximate solution of the original problem \eqref{orip} is
\begin{eqnarray}\label{N1}
N=\left\lceil	
\frac{d_2 + \sqrt{d_1}\sqrt{\epsilon}} {\epsilon} \right\rceil,
\end{eqnarray}
which is of order $\mathcal{O}\left(\frac{1}{\epsilon}\right)$. Moreover, the $\cal SFO$ complexity of finding an $\epsilon$-approximate solution of the original problem \eqref{orip} is $\mathcal{O}\left(\frac{1}{\epsilon^{2}}\right)$.
\end{corollary}

\begin{proof}
By Theorem \ref{prop3.2},  in order to get an $\epsilon$-approximate solution, we need
\begin{eqnarray*}
\frac{d_1}{N^2}+\frac{d_2}{N}\leq \epsilon.
\end{eqnarray*}
Then, {we can choose $N$ in  \eqref{N1} to satisfy the above inequality, because}
\begin{eqnarray*}
N = \left\lceil	
\frac{d_2 + \sqrt{d_1}\sqrt{\epsilon}} {\epsilon} \right\rceil \ge \frac{d_2 + \sqrt{d_2^2+ 4 \epsilon d_1}}{2\epsilon}:= b
\end{eqnarray*}
and 
\begin{eqnarray*}
\epsilon b^2 - d_2 b - d_1 =0, 
\end{eqnarray*}
implies 
\begin{eqnarray*}
\epsilon N^2 - d_2 N -d_1 >0, \quad  \mbox{i.e.}, \ \frac{d_1}{N^2} + \frac{d_2}{N} < \epsilon.
\end{eqnarray*}

In this case, the order of the $\mathcal{SFO}$ complexity for finding an $\epsilon$-approximate solution is 
\begin{eqnarray*}
\overline{N}=\sum_{k=1}^Nm_k= \sum_{k=1}^N k = \frac{N(N+1)}{2} = \mathcal{O}\left( \frac{1}{\epsilon^2}\right) .
\end{eqnarray*}
\end{proof}

{Now we give remarks about the merits of our SSAG method, compared with other state-of-the art SA-type methods.}
\begin{remark}
To the best of our knowledge, the SSAG method is the first SA-type method, that simultaneously achieves the best-known order ${\cal{O}}(\frac{1}{\epsilon})$ of iteration complexity, and the optimal order ${\cal{O}}(\frac{1}{\epsilon^2})$ of $\cal{SFO}$ complexity for (\ref{orip}), as shown in Corollary \ref{corbatchk}. 
Here the diminishing smoothing parameter and the variable mini-batch size $k$ of samples at iterate $k$ are adopted.  

The sVS-APM method in \cite{jalilzadeh2022smoothed} can achieve the optimal order of iteration complexity, but it uses larger variable mini-batch size $\lfloor k^{1+\delta} \rfloor$ of samples for an arbitrary $\delta>0$ at iterate $k$. According to Theorem 4 of \cite{jalilzadeh2022smoothed}, if sVS-APM uses the variable mini-batch size $k$ at the $k$-th iterate as our SSAG method, the order of iteration complexity is worse than ${\cal O}(\frac{1}{\epsilon})$, because it requires the number of iterations $N$ to satisfy
$${\cal{O}} \left(\frac{\log N}{N} \right)\le \epsilon.$$
Moreover,  the  order of ${\cal SFO}$ complexity of  sVS-APM method in \cite{jalilzadeh2022smoothed} is worse than that of our SSAG method, because it  can only be arbitrarily near optimal order ${\cal O}(\frac{1}{\epsilon^2})$ of $\cal{SFO}$ complexity.

The SSAG method with the diminishing smoothing parameter   achieves the optimal order ${\cal{O}}(\frac{1}{\epsilon^2})$ of $\cal{SFO}$ complexity in order to  find an $\epsilon$-approximate solution, no matter it adopts the variable mini-batch size of samples (Corollary \ref{corbatchk}). 
The order is the same as
that obtained by the state-of-the-art SA algorithms \cite{nemirovski2009robust,lan2012optimal,ghadimi2012optimal,ghadimi2016accelerated,  wang2022stochastic}, and the above algorithms do not provide the best-known order of iterate  complexity.  It is worth mentioning that our SSAG method does not require the nonsmooth components to have easily obtainable proximal operators, as required in \cite{nemirovski2009robust,lan2012optimal,ghadimi2012optimal,ghadimi2016accelerated}. For a predetermined accuracy $\epsilon>0$ and the choice of the batch size $m_k$, the iteration limit $N$ is calculated by \eqref{N1}. Hence the SSAG method with the diminishing smoothing parameter does not require a prior knowledge of the number of iterations $N$ to be performed, thereby
rendering it suitable to online and streaming applications. 
\end{remark}

\begin{remark}
Unlike the RS method given by Duchi et al. in \cite{duchi2012randomized} and the SNSA method introduced by Wang et al. in \cite{wang2022stochastic} that restricted to a certain smoothing technique, our SSAG method is capable of enrolling general smoothing techniques that can be dimension-independent as Nesterov's smoothing and inf-conv smoothing.   The SSAG method allows the diminishing smoothing parameter and consequently has asymptotic convergence, while the SNSA method only allows constant smoothing parameter and can only guarantee an $\epsilon$-approximate solution.

Unlike the bounds in the ${\cal{SFO}}$ complexity results of the RS method (Corollaries 2.3-2.6 of \cite{duchi2012randomized}) that are dimension-dependent, the bounds of our complexity results (Theorem \ref{prop3.2}, Corollary \ref{corbatchk} can be dimension-independent if proper smoothing techniques are employed, which is beneficial for high-dimensional problems.

In fact, whether the bounds for our complexity results are dimension-dependent or dimension-independent is determined by the constant $\kappa$ and $\sigma$. For instance, if the inf-conv smoothing is adopted with $\omega$ defined in (\ref{b-omega}), then according to Lemma \ref{lemma2.1} and (\ref{gradient-omega}) we know that $\kappa = \omega(0)= \ln q$ and $\sigma \le 1$. Consequently the bounds are dimension-independent. In contrast, if the RS technique is employed with auxiliary random vector $v\sim {\cal{N}}(0,I_{d})$, then according to Lemma  \ref{lemma2.21}, $\kappa = L_0 \sqrt{d}$ and hence the bounds are dimension-dependent. 

The above observations provide affirmative answers to the questions in Section 5 of \cite{duchi2012randomized} that dimension-dependent smoothing techniques and dimension-dependent bounds are possible.

\end{remark}

\section{Applications and numerical results}\label{sec:numerical}
We do numerical experiments on {the} DRO-moment problem \cite{xu2018distributionally} 
in {Sect.} \ref{subsec:DROportfolio}. Among the application, the transformation of the DRO-moment problem to the problem (\ref{orip}) where $h$ is defined in (\ref{max}) is given in detail, which can be seen as an extension of \cite{wang2023distributionally}. We compare our SSAG method with the following state-of-the-art methods. 
\begin{itemize}
\item[(1)]SSAG: The batch sizes $m_k= k$, and the iteration limit $N$ is computed by using \eqref{N1}. Summing up the batch sizes $m_k$ ($k=1,\ldots,N$) chosen for the SSAG method results in $\overline{N}$, i.e., the budget for total number of calls to {the} $\mathcal{SFO}$. We set $\overline{N}$ to be the same for different methods.
\item[(2)]SA: The batch sizes $m_k= k$, and the stepsize rule follows $\mathcal{O}(\frac{1}{k})$ in \cite{nemirovski2009robust}.
\item[(3)]Subgrad: The Armijo's stepsize rule is used instead of $\mathcal{O}(\frac{1}{k})$ in \cite{nemirovski2009robust} because the computational performance is better than that using 
stepsize $\mathcal{O}(\frac{1}{k})$ in \cite{nemirovski2009robust}. The batch size $m=N_{\rm{tr}}$ in each iteration, where $N_{\rm{tr}}$ is the number of the training samples.

\item[(4)]RS \cite{duchi2012randomized}: 
The auxiliary random vector $v\in \mathbb{R}^{d}$ follows the  probability density function $\rho$ that is uniform on $B(0,1)$. The initial smoothing parameter, the formula for the diminishing sequence of smoothing parameters, the stepsize choices, and the parameters in the RS method follow from Corollary 2.3 of \cite{duchi2012randomized}. 
\item[(5)]{SPG \cite{zhang2009smoothing,zhang2020smoothing}: The smoothing projected gradient method (SPG) is a deterministic method, which was first proposed in \cite{zhang2009smoothing}, and extended in \cite{zhang2020smoothing}.} 
\end{itemize}


{The batch size $m$ in RS is determined in the following way.} For different batch sizes, we make 5 runs of the method under a CPU time limit of 50 seconds (50s) for each run, and select the optimal batch size so that the average objective value decreases the fastest with this $m$. {For RS, $m$ is tried in the set $\{10^{l},~l=0,1,2,3,4 \}$, respectively.} 
Then the iteration limit $N$ of {RS} can be calculated as $N=\left\lfloor \frac{\overline{N}}{m}\right\rfloor$. Since Subgrad and SPG are deterministic methods, their batch size $m$ equals the number of training samples {of} the problem. Additionally, their iteration limits $N$ are computed in the same way as RS. Denote by $N_{\rm{tol}}$ the {total} sample size, and by $N_{\rm{tr}}$ the number of training samples.
We follow the way of estimating the parameter $\sigma^2$ as in \cite{ghadimi2016mini}. To be specific, using the training samples, we compute the stochastic gradients of the objective function $\lceil N_{\rm{tr}}/100\rceil$ times at 100 randomly selected points and then take the average of the variances of the stochastic gradients for each point as an estimation of $\sigma^2$.

For the DRO-moment problem, we estimate an approximate solution
$\breve{y}$ 
by running Subgrad 10000 iterations.
We follow the way of giving a CPU time budget as in \cite{bai2021convergence,bai2022inexact}. For each problem, we make 20 runs of each method under the CPU time budget 200s for each run in order to have statistical relevance of the results. 
Let $y_k$ be the $k$-th {iterate} point of a certain method. We also use
\begin{eqnarray}\label{stop}
\psi(y_k)-\psi(\breve{y})\leq\epsilon
\end{eqnarray}
to stop the method. If one of the three conditions (CPU time budget, inequality \eqref{stop}, the total number {$\bar N$} of calls to $\cal{SFO}$) is met, then we stop the method. 

All experiments are performed in Windows 11 on an AMD Ryzen 9 7900X 12-Core CPU at 4.70 GHz with 32 GB of RAM, using MATLAB R2024b.

\subsection{DRO-moment problem}\label{subsec:DROportfolio}
Let ${S=(\xi_1,\ldots,\xi_q)}$ be the data matrix composed by historical data, $\hat{\varsigma}$ and $\hat{\Sigma}$ be the mean vector and covariance matrix of ${S}$.  As in \cite{delage2010distributionally}, we assume that $\hat{\Sigma}$ is a symmetric positive definite matrix. Let $\mathcal{M}$ be the convex set of all probability measures $P$ in the measurable space $(\Theta, \mathds{B})$, with $\Theta \subseteq \mathbb{R}^{n+1}$ being a convex compact set known to contain the support of $P$, and $\mathds{B}$ being the Borel $\sigma$-algebra on $\Theta$. 

We consider the DRO-moment model as follows:
\begin{eqnarray}\label{nonDROorip}
& & \min\limits_{\hat{x}\in \hat{X}}\max\limits_{P\in\mathscr{P}}~\left\lbrace  \mathbb{E}_{P}[\varphi(\hat{x},\xi)] + \phi(\hat{x})\right\rbrace,
\end{eqnarray}
where $\hat{X}$ is a compact convex set, 
$\varphi(\cdot,\xi)$ is convex and possibly nonsmooth for every random vector $\xi\in \Theta$, $\phi$ is convex and possibly nonsmooth, and the ambiguity set $\mathscr{P}$ is defined as 
\begin{eqnarray}
\label{ambiguityset}
\quad\quad 	\mathscr{P}=
\left\{ \begin{array}{l|l}
	P\in \mathcal{M} & \begin{array}{l}
		\mathbb{E}_{P}\left[\begin{aligned}
			&-\hat{\Sigma}&\hat{\varsigma}-\xi\\
			&(\hat{\varsigma}-\xi)^{T} &-\hat{t}_{1} 
		\end{aligned} \right] \preceq 0\\
		\mathbb{E}_{P}\left[ (\xi-\hat{\varsigma})(\xi-\hat{\varsigma})^{T}\right] \preceq \hat{t}_{2} \hat{\Sigma}\\
		\mathbb{E}_{P}\left[ 1\right] =1
	\end{array}
\end{array} \right\},
\end{eqnarray}
with  $\hat{t}_{1},~\hat{t}_{2}>0$. 

The DRO-moment model is broadly applicable across various domains due to the versatility in selecting $\varphi(\cdot,\xi)$ and $\phi(\cdot)$. For example, 
\begin{eqnarray*}
& &\varphi_1(\hat{x},\xi) = \|A_{\xi} \hat x - b_{\xi}\|^2,\quad \varphi_2(\hat{x},\xi)=
\|\max\{A_{\xi} \hat x - b_\xi, 0\}\|;\\ 
& & \phi_1(\hat{x})=\varrho_1\|\hat{x}\|_{1},\quad \phi_2(\hat{x})=\varrho_2\|\hat{x}\|^2,
\end{eqnarray*}
where $ \varrho_1, \varrho_2$ are positive constants, and  $A_{\xi}$, $b_{\xi}$ are a stochastic matrix and a stochastic vector, respectively.
Here $\varphi_1(\hat x,\xi)$  and $\varphi_2(\hat x,\xi)$ are the data fidelity terms using least squares and max operator, respectively.
The functions $\phi_1(\hat x)$ and $\phi_2(\hat x)$ are introduced to promote sparsity and enhance out-of-sample performance, respectively.



Let $\mathbf{x_{\Lambda}}:=(\hat{x},\delta,\Lambda)\in \hat{X}\times\mathbb{R}^{n+1}\times\mathbb{S}_+^{n+1}$. Using 
Lemma 1 of \cite{delage2010distributionally} by Delage and Ye, we equivalently transform the inner max problem of \eqref{nonDROorip} to an equivalent semi-infinite problem by the Lagrange dual and {the strategy that further simplifies this dual problem by solving analytically for the $(n+2)^2$
dual variables corresponding to the first constraint in (\ref{ambiguityset}) using an auxiliary vector of $n+1$ variables, while keeping the other dual variables. In fact, this strategy reduces $n^2+3n+3$  unknown variables, at the expense of adding a nonsmooth term in the objective function. The mathematical model of the equivalent} semi-infinite program of \eqref{nonDROorip} is  
\begin{eqnarray}\label{semiinfi}
\begin{aligned}
	\min\limits_{\mathbf{x_{\Lambda}}}\quad&h_1(\mathbf{x_{\Lambda}})+r\\
	\rm{s.t.}\quad& h_{2,\xi}(\mathbf{x_{\Lambda}})\leq r ,~\forall\xi\in\Xi,\\
	\quad&  \mathbf{x_{\Lambda}}\in \hat{X}\times\mathbb{R}^{n+1}\times\mathbb{S}_+^{n+1}.
\end{aligned}
\end{eqnarray}
Here $h_1(\mathbf{x_{\Lambda}})$ and $h_{2,\xi}(\mathbf{x_{\Lambda}})$ are possibly nonsmooth functions defined as follows
\begin{eqnarray*}
h_1(\mathbf{x_{\Lambda}})&=&\left\langle\hat{t}_{2}\hat{\Sigma},\Lambda\right\rangle+(\Lambda^{T}\hat{\varsigma}+\delta)^{T}\hat{\varsigma}+\sqrt{\hat{t}_{1}}\left\|\hat{\Sigma}^{1/2} (\delta + 2 \Lambda {\hat{\varsigma}}) \right\|+\phi(\hat{x}),\\
h_{2,\xi}(\mathbf{x_{\Lambda}})&=&\varphi(\hat{x},\xi)-(\Lambda^{T}\xi+\delta)^{T}\xi,
\end{eqnarray*}
where $\hat{\Sigma}^{1/2}$ stands for the symmetric positive definite square root of 
$\hat{\Sigma}$.
By substituting the semi-infinite constraints
as $\max\limits_{\xi\in\Xi}\left\{ h_{2,\xi}(\mathbf{x_{\Lambda}})\right\}\leq r$, and enrolling it in the objective, we get an equivalent form of \eqref{semiinfi} as
\begin{eqnarray}\label{innerdual}
\begin{aligned}
	\min\limits_{\mathbf{x_{\Lambda}}}\quad&\left\{\hat{\varphi}(\mathbf{x_{\Lambda}}):=h_{1}(\mathbf{x_{\Lambda}})+\max\limits_{\xi\in\Xi}\quad h_{2,\xi}(\mathbf{x_{\Lambda}})\right\}\\
	\rm{s.t.}\quad&  \mathbf{x_{\Lambda}}\in \hat{X}\times\mathbb{R}^{n+1}\times\mathbb{S}_+^{n+1}.
\end{aligned}
\end{eqnarray}
Employing the discretization scheme as in {Sect.} 3 of \cite{xu2018distributionally} by Xu et al., we have the discretized tractable reformulation for the DRO-moment model 
as
\begin{eqnarray}\label{peldis}
\begin{aligned}
	\min\limits_{\mathbf{x_{\Lambda}}}\quad&\left\{\hat{\varphi}^q(\mathbf{x_{\Lambda}}):=h_{1}(\mathbf{x_{\Lambda}})+\max\limits_{i\in\mathbb{I}_{q}}\quad h_{2,\xi_i}(\mathbf{x_{\Lambda}})\right\}\\
	\rm{s.t.}\quad&  \mathbf{x_{\Lambda}}\in \hat{X}\times\mathbb{R}^{n+1}\times\mathbb{S}_+^{n+1},
\end{aligned}
\end{eqnarray}
where $\xi_i, i\in \mathbb{I}_{q}$ are i.i.d. samples of $\xi$ drawn by Monte Carlo sampling from the set $\Theta$.
In our model, both the function $\varphi(\cdot,\xi)$ and $\phi(\cdot)$ can be nonsmooth. Moreover, 
$\sqrt{\hat{t}_{1}}\left\|\hat{\Sigma}^{1/2} (\delta + 2 \Lambda \hat{\varsigma}) \right\|$ is also nonsmooth at the point $(\hat x,\delta,\Lambda)$ such that $\delta + 2 \Lambda \hat{\varsigma} =0$. This nonsmooth term is due to the delicate strategy to reduce the number of dual variables
as in Lemma 1 of \cite{delage2010distributionally}. 

The discretized tractable reformulation in (\ref{peldis}) is different to that in  \cite{xu2018distributionally}.  In \cite{xu2018distributionally}, the objective function of the DRO-moment problem is required to be smooth, and the strategy to decrease the number of the dual variables as in Lemma 1 of \cite{delage2010distributionally} is not employed. Consequently the discretized reformulation for the DRO-moment problem in \cite{xu2018distributionally} is a finite min-max problem and all the functions in the max operator are smooth. The cutting plane (CP) method is employed to address this problem \cite{xu2018distributionally} that is also a deterministic algorithm.

As follows, we consider a specific instance in risk management. Risk management in portfolio optimization determines an optimal weight for each asset, by solving specific optimization problems reflecting the risk attitudes of the manager on known data samples \cite{kremer2020sparse,martinez2021experimental}. {Let $\xi_{B}\in \mathbb{R}^{n}$ be the return rate vector of the $n$ assets,} and $\xi_{a}\in \mathbb{R}$ be the corresponding random market index return. We denote by $\xi = (\xi_B^T,\xi_a)^T \in \Theta\subseteq\mathbb{R}^{n+1}$. {Let $\hat{z} = (\hat{z}_1,\ldots,\hat{z}_n)^T \in \Delta_{n}$ be the tracking portfolio, with $\hat{z}_i$ being the investment weight in the $i$-th component stock.} We consider a distributionally robust index-tracking portfolio optimization problem with the CVaR penalty \cite{wang2023distributionally}:
\begin{eqnarray}\label{DROorip}
	\min\limits_{\hat{z}\in \Delta_{n}}\max\limits_{P\in\mathscr{P}}~ \mathbb{E}_{P}\left[ \left\|\xi_a-\hat{z}^{T}\xi_{B}\right\|^2\right] 
	+\tau_1 \|\hat{z}\|^2 + \tau_2 
	\phi_{\beta}(\hat{z}),
\end{eqnarray}
where the regularization parameters $\tau_{1},~\tau_{2}> 0$, 
and $\phi_{\beta}$ is the CVaR penalty defined as
\begin{eqnarray}\label{cvar}
	\phi_\beta(\hat{z})= \min\limits_{\alpha\in\mathbb{R}} \left\{\alpha+(1-\beta)^{-1}\mathbb{E}_{P}\left[-\hat{z}^{\top}\xi_{B}-\alpha\right]_{+}\right\}.
\end{eqnarray}
Using the similar arguments in Example 3 of \cite{delage2010distributionally}, we interchange $\max\limits_{P\in\mathscr{P}}$ and $\min\limits_{\alpha\in\mathbb{R}}$, then transfer \eqref{DROorip} equivalently in the form of  \eqref{nonDROorip} with $\hat{x}:=(\hat{z}^T,\alpha)^T$ and $\hat{X}=\Delta_{n}\times\mathbb{R}$, 
\begin{eqnarray*}
	\varphi(\hat{x},\xi)=\left\|\xi_a-\hat{z}^{T}\xi_{B}\right\|^2+\tfrac{\tau_{2}}{1-\beta}\left[-\hat{z}^{\top}\xi_{B}-\alpha\right]_{+},
	\quad\text{and}\quad
	\phi(\hat{x})=\tau_{1}\|\hat{z}\|^2+\tau_{2}\alpha.
\end{eqnarray*} 
The discretized tractable reformulation for \eqref{DROorip} is the same as \eqref{peldis}.

We employ the smoothing function \eqref{inf-conv-Emax} and obtain the smoothing problem of \eqref{peldis} as
\begin{equation}\label{smooDRO}
	\begin{aligned}
		\min\limits_{\mathbf{x_{\Lambda}}} \quad&\left\lbrace \tilde{\varphi}_{\mu}^q(\mathbf{x_{\Lambda}}):=~ \tilde{h}_{1,\mu}(\mathbf{x_{\Lambda}})+\mu\ln\left(\sum\limits_{i=1}^{q} e^{\frac{\tilde{h}_{2,\xi_{i},\mu}(\mathbf{x_{\Lambda}})}{\mu}}\right) \right\rbrace \\
		\text { s.t. } \quad & \mathbf{x_{\Lambda}}\in \hat{X}\times\mathbb{R}^{n+1}\times\mathbb{S}_+^{n+1},
	\end{aligned}
\end{equation}
where
\begin{eqnarray*}
	&&\tilde{h}_{1,\mu}(\mathbf{x_{\Lambda}})=\left\langle\hat{t}_{2}\hat{\Sigma},\Lambda\right\rangle+(\Lambda^{T}\hat{\varsigma}+\delta)^{T}\hat{\varsigma}+\sqrt{\hat{t}_{1}}\sqrt{(\delta+2\Lambda\hat{\varsigma})^{T}\hat{\Sigma}(\delta+2\Lambda\hat{\varsigma})+\mu}+\tau_{1}\|\hat{z}\|^2+\tau_{2}\alpha,\\
	&&	\tilde{h}_{2,\xi_{i},\mu}(\mathbf{x_{\Lambda}})=
	\left\|\xi_{a,i}-\hat{z}^{T}\xi_{B,i}\right\|^2 +\frac{\tau_{2}\mu}{1-\beta}\ln\left( 1+e^{\frac{-\hat{z}^{\top}\xi_{B,i}-\alpha}{\mu}}\right)
	-(\Lambda^{T}\xi_{i}+\delta)^{T}\xi_{i}.
\end{eqnarray*}

Here we claim that  
$\sqrt{\hat{t}_{1}(\delta+2\Lambda\hat{\varsigma})^{T}\hat{\Sigma}(\delta+2\Lambda\hat{\varsigma}) + \mu}$ is a smoothing function of the term $\left\|\sqrt{\hat{t}_{1}}\hat{\Sigma}^{1/2} (\delta + 2 \Lambda \hat{\varsigma}) \right\|$,
{which is} essentially makes use 
of the inf-conv smoothing technique for $\|x\|$ that is mentioned in Remark 2.  We then get $\tilde{h}_{1,\mu}(\mathbf{x_{\Lambda}})$ is a smoothing function of $h_{1}(\mathbf{x_{\Lambda}})$ with parameters 
\begin{eqnarray*}\label{normpa}
	\left(\kappa_{r}, K_{r}, \mathcal{L}_{h,{r}}\right) = \left(\sqrt{\hat{t}_{1}}, 2\tau_{1}, \sqrt{\hat{t}_{1}}
	\left\|\hat{\Sigma}^{1/2}\right\|\left(1+2\left\|\hat{\varsigma} \right\| \right) \right).
\end{eqnarray*}
Meanwhile,  
we know 
that $\tilde{h}_{2,\xi_{i},\mu}(\mathbf{x_{\Lambda}})$ is a smoothing function of $h_{2,\xi_{i}}(\mathbf{x_{\Lambda}})$ with parameters 
\begin{eqnarray*}
	\left(\kappa_{i}, K_{i}, \mathcal{L}_{h,i} \right)
	= \left( \tfrac{\tau_{2}\ln 2}{1-\beta}, 2\left\|\xi_{B,i}\right\|^2, \tfrac{\tau_{2}\left\|\xi_{B,i}\right\|^2}{1-\beta}
	\right).
\end{eqnarray*}

By simple computation, 
\begin{eqnarray*}
	& & \max\limits_{\mathbf{x_{\Lambda}}\in \hat{X}\times\mathbb{R}^{n+1}\times\mathbb{S}_+^{n+1},~\mu> 0}\left\{\left\|\nabla \tilde{h}_{2,\xi_i,\mu}(\mathbf{x_{\Lambda}}) \right\|\right\} 
	\\
	&\le& \max\limits_{\hat{z}\in \Delta_{n},~\mu>0}
	\left\lbrace
	\sqrt{
		\left[2\left(-\hat{z}^T ,1\right)\xi_{i}+\tfrac{\tau_{2}}{1-\beta}\right]^2\left\|\xi_{B,i} \right\|^2
		+
		\left( \tfrac{\tau_{2}}{1-\beta}\right)^2
		+\left\|\xi_{i} \right\|^2
		+\left\|\xi_{i}\right\|^4  
	}
	\right\rbrace\\
	&\leq&\sqrt{ \left[ \left( \tfrac{\tau_{2}}{1-\beta}\right)^2+
		8\left\|\xi_{i}\right\|^2
		+\frac{8\tau_{2}}{1-\beta}\left\|\xi_{i}\right\|\right] 
		\left\|\xi_{B,i} \right\|^2+\left( \tfrac{\tau_{2}}{1-\beta}\right)^2+\left\|\xi_{i} \right\|^2+\left\|\xi_{i}\right\|^4 
	} := M_i.
\end{eqnarray*}
Then combining Lemma \ref{lem3.21} and Remark 1 
yields that $\tilde{\varphi}_{\mu}^q(\mathbf{x_{\Lambda}})$ is a smoothing function of $\hat{\varphi}^q(\mathbf{x_{\Lambda}})$, with parameters $\left( \kappa, K, \hat{\mathcal{L}}_{h} \right)$, where 
\begin{eqnarray*}
	\kappa = \kappa_r+\ln q+\sqrt{\sum_{i=1}^q\kappa_i}, \quad K = K_r+\sum_{i=1}^{q}K_i, \quad \hat{\mathcal{L}}_{h} 
	=\mathcal{L}_{h,r} + 
	\left( \sum_{i=1}^q  M_i\right) ^2+ \sum_{i=1}^q \mathcal{L}_{h,i}.
\end{eqnarray*}
We use a historical daily return rate of $n=40,~80$ stocks between January 2005 and July 2023 from National Association of Securities Deal Automated Quotations (NASDAQ) index\footnotemark[2]\footnotetext[2]{https://cn.investing.com}, which contains $N_{\rm{tol}}=4675$ samples, that is, $q=4675$ in \eqref{peldis}. We denote the daily return rates of $n$ assets on the $i$-th day as $\xi_{B,i} = (\xi_{B_1,i}, \xi_{B_2,i}, \ldots, \xi_{B_n,i})^{T}$, where $\xi_{B_j,i}$ represents the natural logarithm of the closing price divided by the opening price of the $j$-th asset on the $i$-th day for $i=1,\ldots, N_{\rm{tol}}$.

We list the mean of objective value (Obj) in the training set and the CPU time in Table \ref{tab1.1}. {Since Subgrad and SPG are deterministic methods, the objective values of 20 times are the same, which leads ``Obj Var"  to be 0.} It is clear that SSAG provides the computed solutions with the smallest ``Obj Mean" for $\epsilon=0.01,~0.001,~0.0001$ among all the methods. The ``Obj Var" is also very small. Moreover, we can see from the last column of Table \ref{tab1.1} that the proposed SSAG {method} can obtain the desired objective value within 20 seconds, and the CPU time does not grow dramatically as $d$ increases and $\epsilon$ decreases. In contrast, the CPU time of the Subgrad method grows quickly as $\epsilon$ decreases. {From Table \ref{tab1.1}, we can see that the SSAG method is around $5\sim 10$ times faster than the second best method - SPG}. 

\begin{table}[htp]
\caption{Obj and CPU time with parameters $(d,\epsilon,\overline{N}, \mu, m)$ }\label{tab1.1}
\begin{tabular}{ccccccccc}
	\toprule
	\multirow{2}{*}{$d$}&			\multirow{2}{*}{$\epsilon$}&\multirow{2}{*}{$\overline{N}$}&\multirow{2}{*}{ALG.} &\multirow{2}{*}{$\mu$}	&\multirow{2}{*}{$m$}&Obj &Obj &\multirow{2}{*}{CPU}  \\ 
	& & & & & &Mean &Var &	\\\midrule
	\multirow{15}{*}{$40$}&				\multirow{5}{*}{0.01}&\multirow{5}{*}{1.12e+09}
	&SSAG
	&$\downarrow$ &$\uparrow$ &\textbf{0.1152} &6.68e-10&\textbf{3.25} \\
	& & 	&RS
	&$\downarrow$ 
	&100 &\textbf{0.1152} &3.07e-09&39.01 \\
	&		& &Subgrad
	&- 
	&4675 &0.1153&\textbf{0}&143.26 \\
				&		& &SA
				&- 
				&$\uparrow$ &0.2245&1.28e-05&200.00 \\
	&		& &SPG
	&$\downarrow$ 
	&4675 &0.1195&\textbf{0} &7.45 \\	\cmidrule{2-9}
	&			\multirow{5}{*}{0.001}&\multirow{5}{*}{1.12e+11}
	&SSAG
	&$\downarrow$&$\uparrow$ &\textbf{0.1062}  &1.27e-10 &\textbf{4.21} \\
	& & 	&RS
	&$\downarrow$ 
	&100 &\textbf{0.1062} &3.35e-9&71.32 \\
	&			& &Subgrad
	&- 
	&4675 &0.1105&\textbf{0} &200.00 \\
				&			& &SA
				&- 
				&$\uparrow$ &0.2237&1.19e-05 &200.00 \\
	&			& &SPG
	&$\downarrow$ 
	&4675 &0.1072&\textbf{0} &9.80 \\	\cmidrule{2-9}
	&			\multirow{5}{*}{0.0001}&\multirow{5}{*}{1.12e+13}
	&SSAG
	&$\downarrow$&$\uparrow$&\textbf{0.1053}  &3.78e-11 &\textbf{4.75}\\
	& & 	&RS
	&$\downarrow$ 
	&100 &\textbf{0.1053} &1.11e-09 &73.68 \\
	&			& &Subgrad
	&- 
	&4675 &0.1104 &\textbf{0} &200.00 \\
				&			& &SA
				&- 
				&$\uparrow$ &0.2234 &1.40e-05 &200.00 \\
	&			& &SPG
	&$\downarrow$ 
	&4675 &0.1056&\textbf{0} &200.00 \\
	\midrule
	\multirow{15}{*}{$80$}&				\multirow{5}{*}{0.01}&\multirow{5}{*}{1.12e+09}
	&SSAG
	&$\downarrow$&$\uparrow$&\textbf{0.3570} &2.20e-09&\textbf{6.72} \\
	& & 	&RS
	&$\downarrow$ 
	&100 &\textbf{0.3570} &4.31e-08&72.14 \\
	&		& &Subgrad
	&- 
	&4675 &0.3790&\textbf{0}&200.00\\
				&		& &SA
				&- 
				&$\uparrow$ &0.5764&4.78e-05&200.00 \\
	&		& &SPG
	&$\downarrow$ 
	&4675 &0.3610&\textbf{0} &200.00 \\	\cmidrule{2-9}
	&			\multirow{5}{*}{0.001}&\multirow{5}{*}{1.12e+11}
	&SSAG
	&$\downarrow$ &$\uparrow$ &\textbf{0.3480}  &3.08e-09 &\textbf{7.33} \\
	& & 	&RS
	&$\downarrow$ 
	&100 &\textbf{0.3480} &1.36e-08&79.97 \\
	&			& &Subgrad
	&- 
	&4675 &0.3794&\textbf{0} &200.00 \\
				&			& &SA
				&- 
			&$\uparrow$ &0.5749&5.20e-05 &200.00 \\
	&			& &SPG
	&$\downarrow$ 
	&4675 &0.3610&\textbf{0} &200.00\\	\cmidrule{2-9}
	&			\multirow{5}{*}{0.0001}&\multirow{5}{*}{1.12e+13}
	&SSAG
	&$\downarrow$ &$\uparrow$&\textbf{0.3471} &2.23e-09 &\textbf{7.54} \\
	& & 	&RS
	&$\downarrow$ 
	&100 &\textbf{0.3471}  &5.91e-09&83.25 \\
	&			& &Subgrad
	&- 
	&4675 &0.3791&\textbf{0} &200.00 \\
				&			& &SA
				&- 
				&$\uparrow$ &0.5737 &3.41e-05 &200.00 \\
	&			& &SPG
	&$\downarrow$ 
	&4675 &0.3610&\textbf{0} &200.00 \\
	\bottomrule
\end{tabular}
\end{table}

%
%
%

{At the end of Sect. \ref{subsec:DROportfolio}}, we mention that we also try 
the CP method in \cite{xu2018distributionally} to solve the smooth counterpart 
with  a fixed smoothing parameter of $\mu=1\text{e-4}$, because in \cite{xu2018distributionally} it requires the smoothness of the functions within the {max} operator. The CP method is very slow to obtain a comparable objective value. Therefore, we do not present the results in Table \ref{tab1.1}.

\section{Conclusions}\label{sec:conclusions}

In this paper, we propose a stochastic smoothing accelerated gradient method for solving nonsmooth convex composite minimization problems. Various smoothing techniques can be employed to construct smoothing functions that satisfy Definition \ref{smoothingdefinition}  and  Assumption \ref{assumption 1}. 
As far as we know, it is the first time to propose an SA-type method to solve the constrained convex composite optimization problem whose nonsmooth term involves the maximization of finite but numerous nonsmooth convex functions. 
Moreover, the complexity results in terms of the number of iterations and the $\mathcal{SFO}$ match the best-known complexity bounds of the state-of-the-art first-order SA methods.
The effectiveness and efficiency of our SSAG method have been  demonstrated by extensive numerical results.
In future, it is very interesting to develop smoothing  SA-type methods, either first-order or second-order methods, that address general nonsmooth nonconvex composite optimization, extending the excellent works  \cite{ghadimi2016mini,jalilzadeh2022variable}.

\backmatter

%
%
%

\bmhead{Acknowledgments}
We thank Prof. Yongchao Liu of Dalian University of Technology, for providing us the Matlab code of the CP method used in \cite{xu2018distributionally}. We are grateful to Prof. Uday V. Shanbhag of Pennsylvania State University for his constructively discussing with us about the sVS-APM method in \cite{jalilzadeh2022smoothed}.

\section*{Declarations}


\begin{itemize}
\item National Natural Science Foundation of China (No.12171027)
\item The authors declare no conflict of interest.
\item The data used to support this study are included within the article.
\item Conceptualization: Ruyu Wang, Chao Zhang; Methodology: Ruyu Wang, Chao Zhang; Writing - original draft preparation: Ruyu Wang, Chao Zhang; Writing - review and editing: Ruyu Wang, Chao Zhang; Funding acquisition: Chao Zhang; Supervision: Chao Zhang.
\end{itemize}

%
%
%
%
%

\begin{appendices}

\section{Proof of the smoothing functions}\label{appendixa}
In this section, we provide the proofs to show various smoothing approximations satisfy Definition \ref{smoothingdefinition} and Assumption \ref{assumption 1}.
\subsection{Nesterov's smoothing}

\noindent\textbf{Proof of Lemma \ref{lemma2.3}}

(i) According to (2.7) of \cite{nesterov2005smooth}, we know that for a.e. $\xi \in \Xi$ and any $\mu\in(0,\bar{\mu}]$,
$$ \tilde{\mathbf{H}}_{\mu}(x,\xi) \le \mathbf{H}(x,\xi) \le \tilde{\mathbf{H}}_{\mu}(x,\xi) + \kappa \mu\quad \mbox{with}\ \kappa = \max_{u\in U}\left\{d(u)\right\}.$$
Taking the expectation of the above inequalities with respect to $\xi$ yields 
$$\tilde{h}_{\mu}(x)\leq h(x) \leq \tilde{h}_{\mu}(x)+ \kappa \mu.$$
Hence Definition \ref{smoothingdefinition} (a) holds. By using Theorem 1 of \cite{nesterov2005smooth} on $\tilde{\mathbf{H}}_{\mu}(\cdot,\xi)$ and taking the expectation, we know that the convex function $\tilde{h}_{\mu}$ is $L_{\tilde{h}_{\mu}}$-smooth with $L_{\tilde{h}_{\mu}}=\frac{\left\|\mathbb{E}_{\xi}\left[ A_{\xi}\right]\right\|^{2}}{\sigma_{d} \mu}$. Thus, $\tilde{h}_{\mu}$ satisfies Definition \ref{smoothingdefinition} (b) and (d) with $K=0$ and ${\cal L}_{h}=\frac{\left\|\mathbb{E}_{\xi}\left[ A_{\xi}\right]\right\|^2}{\sigma_{d}}$.

Definition \ref{smoothingdefinition} (c) holds, because  for any $\mu_1$, $\mu_2 \in(0,\bar{\mu}]$,
\begin{eqnarray*}
	& & \left| \tilde{h}_{\mu_2}(x)-\tilde{h}_{\mu_1}(x)\right|
	\\
	&&~=\left|	\mathbb{E}_{\xi}\left[ \max_{u\in U}\left\lbrace \left\langle A_{\xi}x, u\right\rangle - Q_{\xi}(u)-\mu_{2} d(u)\right\rbrace -\max_{u\in U}\left\lbrace \left\langle A_{\xi}x, u\right\rangle - Q_{\xi}(u)-\mu_{1} d(u)\right\rbrace \right]\right| \\
	&&~\leq	\left|\mathbb{E}_{\xi}\left[ \max_{u\in U}\left\lbrace \left\langle A_{\xi}x, u\right\rangle - Q_{\xi}(u)-\mu_{2} d(u)-\left(\left\langle A_{\xi}x, u\right\rangle - Q_{\xi}(u)-\mu_{1} d(u)\right)\right\rbrace \right]\right| \\
	&&~\leq \left\lvert \max_{u\in U}\left\lbrace (\mu_{1}-\mu_{2}) d(u)\right\rbrace \right\rvert= \kappa\left|\mu_{1}-\mu_{2}\right|,
\end{eqnarray*}
where the first inequality holds because for any continuous functions $t_1,~ t_2 : U\to \mathbb{R}$,
\begin{eqnarray*}
	\max_{u\in U}\left\{t_1(u)-t_2(u)\right\}+\max_{u\in U}\left\{t_2(u)\right\}
	\geq\max_{u\in U}\left\{t_1(u)-t_2(u)+t_2(u)\right\}
	= \max_{u\in U}\left\{t_1(u)\right\}.
\end{eqnarray*}
Till now, we have shown that statement (i) holds.

(ii) By \eqref{Nesterov-grad} and the fact that $U$ is bounded as required in \eqref{h}, we have
\begin{eqnarray*}
	\mathbb{E}_{\xi}\left[\left\|\nabla \tilde{\mathbf{H}}_{\mu}(x,\xi)\right\|^2\right] = \mathbb{E}_{\xi}\left[\left\|A_{\xi}^{T} \hat u_{\mu}(x,\xi)\right\|^2\right]
	\le \mathbb{E}_{\xi}\left[\left\|A_{\xi}^{T}\right\|^2 \left\|{\hat u}_{\mu}(x,\xi)\right\|^2\right] \le c_1^2 \max_{u\in U} \left\{\left\|u\right\|^2\right\}.
\end{eqnarray*}
Then the statement (ii) holds according to \eqref{As-b-12}.
\qed

\subsection{Randomized smoothing}

\noindent \textbf{Proof of Lemma \ref{lemma2.2}}

(i) By Lemma 7 of \cite{yousefian2012stochastic}, we have $\tilde{h}_{\mu}$ is convex and 
\begin{equation*}
	h(x) \leq \tilde{h}_{\mu}(x) \leq h(x)+L_{0} \mu. 
\end{equation*}
Thus Definition \ref{smoothingdefinition} (a) and (b) hold. By Lemma E.2 (iii) of \cite{duchi2012randomized}, 
the smoothing function $\tilde{h}_{\mu}$ in \eqref{RSsmooth} is $\frac{L_{0} \sqrt{d}}{\mu}$-smooth. Then, Definition \ref{smoothingdefinition} (d) holds with $K=0$ and ${\cal L}_{h}=L_{0} \sqrt{d}$.


By the definition of $\tilde{h}_{\mu}$ in \eqref{RSsmooth}, we have
\begin{eqnarray*}
	\left\lvert \tilde{h}_{\mu_2}(x)-\tilde{h}_{\mu_1}(x)\right\rvert
	&=&\left\lvert \mathbb{E}_{v,\xi}\left[ \mathbf{H}(x+\mu_2 v,\xi)-\mathbf{H}(x+\mu_1 v,\xi)\right]\right\rvert\\
	&=&\left\lvert \mathbb{E}_{v,\xi}\left[ \left\langle \eta_\xi,(\mu_2-\mu_1)v\right\rangle\right]\right\rvert\\
	&\leq& \tfrac{\mathbb{E}_{\xi} \left[\left\| \eta_\xi\right\|^2
		\right]+\mathbb{E}_{v} \left[\left\| v\right\|^2\right]}{2} \left\lvert \mu_2-\mu_1\right\rvert \leq \kappa\left\lvert\mu_{2}-\mu_{1}\right\rvert~\text{with}~\kappa=\tfrac{L_{0}^2+1}{2}.
\end{eqnarray*}
Here for any $\mu_1,~ \mu_2 \in(0,\bar{\mu}]$, the second equality is obtained by the mean value theorem (see, e.g. \cite[Theorem 3.20]{mordukhovich2023easy}), where $\eta_\xi\in\partial \mathbf{H}(x+\mu v,\xi)$ and $x+\mu v \in \operatorname{int}\left(X+B(0,\mu)\right)$ for some $\mu$ in the interior of a line segment connecting $\mu_1$ and $\mu_2$. The first inequality follows from Jensen's inequality that $\left|\mathbb{E}\left[ Z\right]\right|\leq\mathbb{E}\left[ \left|Z\right|\right]$ and Cauchy-Schwarz inequality. Hence $\tilde{h}_{\mu}$ satisfies Definition \ref{smoothingdefinition} (c). Statement (i) holds as desired.

(ii) Assumption \ref{assumption 1} holds by Lemma E.2 (iv) of \cite{duchi2012randomized} with $\sigma^2=L_{0}^2$.
\qed

\vskip 2mm

\noindent \textbf{Proof of Lemma \ref{lemma2.21}}

(i) By Lemma E.3 of \cite{duchi2012randomized}, Definition \ref{smoothingdefinition} (a), (b) and (d) hold with $K=0$ and ${\cal L}_{h}=L_{0}$. By the definition of $\tilde{h}_{\mu}$ in \eqref{RSsmooth}, we have
\begin{eqnarray*}
	\left\lvert \tilde{h}_{\mu_2}(x)-\tilde{h}_{\mu_1}(x)\right\rvert
	&=&	\left\lvert \mathbb{E}_{v,\xi}\left[ \mathbf{H}(x+\mu_2 v,\xi)-\mathbf{H}(x+\mu_1 v,\xi)\right]\right\rvert\\
	&\leq&\mathbb{E}_{v,\xi}\left[ \left\lvert\mathbf{H}(x+\mu_2 v,\xi)-\mathbf{H}(x+\mu_1 v,\xi)\right\rvert\right]\\
	&\leq&	L_{0}\left\lvert \mu_2-\mu_1\right\rvert \mathbb{E}_v\left[\left\|v\right\| \right] \\
	&\leq&L_{0}\left\lvert \mu_2-\mu_1\right\rvert \sqrt{\mathbb{E}_{v} \left[\left\| v \right\|^2\right]}\\
	&=&L_{0}\sqrt{d}\left\lvert \mu_2-\mu_1\right\rvert,
\end{eqnarray*}
where the first and the last inequalities follow from Jensen's inequality, 
and the second inequality follows from the $L_{0}$-Lipschitz condition in (\ref{L0Gaussian}). 
By $v\sim \mathcal{N}\left(0, I_{d}\right)$ and Example 5.21 of \cite{shapiro2021lectures}, 
we have that $\left\|v\right\|^2$ follows the chi-square distribution with mean $d$. So far we have shown that $\tilde{h}_{\mu}$ satisfies property (c) of Definition \ref{smoothingdefinition} with $\kappa=L_{0}\sqrt{d}$. Thus statement (i) holds.

(ii) Assumption \ref{assumption 1} holds by Lemma E.3 (iv) of \cite{duchi2012randomized} with $\sigma^2=L_{0}^2$.
\qed

\subsection{Inf-conv smoothing}
\textbf{Proof of Lemma \ref{lemma2.1}}

(i) By Lemma 4.2 of \cite{beck2012smoothing},
\eqref{inf-conv-reform}, and the condition that
$\omega^*(y)\le 0$ for all $y\in \operatorname{dom} \omega^*$, we know that
for every $\mu\in(0,\bar{\mu}]$ and $\xi\in \Xi$, 
$$\mathbf{H}(x,\xi) \leq \tilde{\mathbf{H}}_{\mu}(x,\xi) \leq \mathbf{H}(x,\xi)+\omega(0) \mu.$$
By taking expectation on the above inequalities with respect to $\xi$, we know that $\tilde{h}_{\mu}$ satisfies Definition \ref{smoothingdefinition} (a). Moreover, by using Theorem 4.1 of \cite{beck2012smoothing} for $\tilde{\mathbf{H}}_{\mu}$ and taking expectation on $\xi$, we know that $\tilde{h}_{\mu}$ is convex, finite-valued, differentiable, and $L_{\tilde{h}_{\mu}}$-smooth with constant $L_{\tilde{h}_{\mu}}=\frac{1}{\sigma_{\omega} \mu}$. Thus, $\tilde{h}_{\mu}$ satisfies Definition \ref{smoothingdefinition} (b) and (d) with $K=0$ and ${\cal L}_{h}=\frac{1}{\sigma_{\omega}}$.

For any $\mu_1,~ \mu_2 \in(0,\bar{\mu}]$, we assume without loss of generality that $0<\mu_2 \le \mu_1$. Consequently $ \tilde{h}_{\mu_2}(x) \le \tilde{h}_{\mu_1}(x)$, according to \eqref{inf-conv-reform} and the assumption that $\omega^*(y)\le 0$ for all $y\in \operatorname{dom} \omega^*$. By using the definition of $\tilde{h}_{\mu}$ and the similar arguments as for the Nesterov's smoothing approximation in Lemma \ref{lemma2.3}, we have
\begin{eqnarray*}
	0 &\le&	\tilde{h}_{\mu_1}(x)-\tilde{h}_{\mu_2}(x)
	\le \mathbb{E}_{\xi}\left[\max_{y\in \mathbb{R}^{d}} \left\{-\mu_1 \omega^*(y) + \mu_2 \omega^*(y)\right\}\right]\\
	&=& (\mu_1 - \mu_2) \max_{y\in \mathbb{R}^{d}} \left\{-\omega^*(y)\right\} = (\mu_1 - \mu_2) \omega(0).
\end{eqnarray*}
Hence $\tilde{h}_{\mu}$ satisfies Definition \ref{smoothingdefinition} (c) with $\kappa=\omega(0)$. Statement (i) holds as desired.

(ii) By the definition of convex conjugate in \eqref{conjugate}, 
we have 
\begin{eqnarray*}			
	\omega^*(y)=\sup_{x\in \mathbb{R}^{d}} \left\{\langle x, y\rangle - \omega(x)\ :\ x\in \operatorname{dom} \omega\right\} \ge -\omega(0) 
	\ge 0.
\end{eqnarray*}
Similar to the proof of statement (i), we obtain
that $\tilde{h}_{\mu}$ satisfies Definition \ref{smoothingdefinition} (a), (b), and (d) with $K=0$ and ${\cal L}_{h}=\frac{1}{\sigma_{\omega}}$.

For any $\mu_1,~\mu_2 \in(0,\bar{\mu}]$, we assume without loss of generality that $0<\mu_2 \le \mu_1$. Consequently $\tilde{h}_{\mu_2}(x) \geq \tilde{h}_{\mu_1}(x)$, according to \eqref{inf-conv-reform} and the condition that $\omega^*(y)\geq 0$ for all $y\in \operatorname{dom} \omega^*$. Since $\omega(\cdot)$ is level bounded, {${\bf{H}}(\cdot,\xi)$ is lower semicontinuous, and $\inf_{x\in \mathbb{R}^d} {\bf H}(x,\xi)>-\infty$} for a.e. $\xi\in\Xi$, we know that for any $\mu\in(0,\bar{\mu}]$ and $x\in X$, 
$$\min_{u\in \mathbb{R}^d} \{ {\bf{H}}(u,\xi) + \mu_2 \omega((x-u)/\mu_2) \}$$
has finite objective value and the infimum can be obtained in a compact set $S_{x}$. Then we have
\begin{eqnarray*}
	0 &\leq&\tilde{h}_{\mu_2}(x)-\tilde{h}_{\mu_1}(x)\\
	&= &\mathbb{E}_{\xi}\left[\min_{u\in S_{x} } \left\{
	\mathbf{H}(u,\xi)+\mu_2 \omega\left( 
	(x-u)/{\mu_{2}}\right)  
	-\tilde{\mathbf{H}}_{\mu_1}(x,\xi)\right\}\right]\\
	&=& \mathbb{E}_{\xi}\left[\min_{u\in S_{x}} \left\{
	\mathbf{H}(u,\xi)+\mu_2 \omega\left( (x-u)/\mu_{2}\right)  
	-\min_{y\in \mathbb{R}^{d}} \left\{
	\mathbf{H}(y,\xi)+\mu_1 \omega\left( (x-y)/\mu_{1}\right) \right\}\right\}\right]\\
	&=& \mathbb{E}_{\xi}\left[\min_{u\in S_{x}} \max_{y\in \mathbb{R}^{d}}\left\{\mathbf{H}(u,\xi)+\mu_2 \omega\left( (x-u)/\mu_{2}\right)  
	- \mathbf{H}(y,\xi)-\mu_1 \omega\left( (x-y)/\mu_{1}\right) \right\}\right].
\end{eqnarray*}
Let $\hat u = (1-\tfrac{\mu_2}{\mu_1}) x + \tfrac{\mu_2}{\mu_1} y$. It is clear that $\frac{x-\hat u}{\mu_2} = \frac{x-y}{\mu_1}$  and 
\begin{eqnarray*}
	\min_{u\in \mathbb{R}^d} \left\{ \mathbf{H}(u,\xi) + \mu_2 \omega\left(\frac{x-u}{\mu_2}\right) \right\} \le \mathbf{H}(\hat u,\xi) + \mu_2 \omega \left(\frac{x-\hat u}{\mu_2} \right).
\end{eqnarray*}
This, together with the fact that ${\bf{H}}(\cdot,\xi)$ is lower semicontinuous and the Minimax Theorem (Theorem 4.2' of \cite{sion1958general}), yields
\begin{eqnarray*}
	0 &\leq& \tilde{h}_{\mu_2}(x)-\tilde{h}_{\mu_1}(x)\\
	&=& \mathbb{E}_{\xi}\left[ \max_{y\in \mathbb{R}^{d}}\min_{u\in S_{x}}\left\{
	\mathbf{H}(u,\xi)+\mu_2 \omega\left( (x-u)/\mu_{2}\right)  
	- \mathbf{H}(y,\xi)-\mu_1 \omega\left( (x-y)/\mu_{1}\right) \right\}\right]\\
	&=& \mathbb{E}_{\xi}\left[ \max_{y\in \mathbb{R}^{d}}\min_{u\in \mathbb{R}^{d}}\left\{
	\mathbf{H}(u,\xi)+\mu_2 \omega\left( (x-u)/\mu_{2}\right)  
	- \mathbf{H}(y,\xi)-\mu_1 \omega\left( (x-y)/\mu_{1}\right) \right\}\right]\\
	&\leq& \mathbb{E}_{\xi}\left[ \max_{y\in \mathbb{R}^{d}}\left\{
	\mathbf{H}\left( \hat u,\xi\right) 
	- \mathbf{H}(y,\xi)+\left( \mu_{2}-\mu_1\right)  \omega\left( (x-y)/\mu_{1}\right) \right\}\right].
\end{eqnarray*}
Then, by using the convexity of $\bf{H}(\cdot,\xi)$ and the expression of $\hat u$, we find
\begin{eqnarray*}
	0 &\leq& \tilde{h}_{\mu_2}(x)-\tilde{h}_{\mu_1}(x)\\
	&\leq& \mathbb{E}_{\xi}\left[ \max_{y\in \mathbb{R}^{d}}\left\{
	\left( 1-\mu_{2}/\mu_{1}\right)\left[ \mathbf{H}\left( x,\xi\right) - \mathbf{H}(y,\xi)\right] -\left( \mu_{1}-\mu_2\right)  \omega\left( (x-y)/\mu_{1}\right) \right\}\right]\\
	&\leq&  \mathbb{E}_{\xi}\left[\max_{y\in \mathbb{R}^{d}}\left\{
	\left( 1-\mu_{2}/\mu_{1}\right)\left\langle \gamma_x^\xi,x-y\right\rangle  -\left( \mu_{1}-\mu_2\right)  \omega\left( (x-y)/\mu_{1}\right) \right\}\right] \\
	&=& \left( \mu_{1}-\mu_2\right) \mathbb{E}_{\xi}\left[\max_{y\in \mathbb{R}^{d}}\left\{\left\langle \gamma_x^\xi,(x-y)/\mu_{1}\right\rangle  -  \omega\left( (x-y)/\mu_{1}\right) \right\}\right] \\
	&=& \left( \mu_{1}-\mu_2\right)\mathbb{E}_{\xi}\left[ \max_{z\in \mathbb{R}^{d}}\left\{\left\langle \gamma_x^\xi,z\right\rangle  -  \omega\left( z\right) \right\}\right] \\
	&=&	(\mu_1 - \mu_2)\mathbb{E}_{\xi}\left[ \omega^*(\gamma_x^\xi)\right] .
\end{eqnarray*}
Here,
$\gamma_x^\xi\in\partial\mathbf{H}\left( x,\xi\right)$ is an arbitrary subgradient of $\mathbf{H}(\cdot,\xi)$ at $x$. Hence $\tilde{h}_{\mu}$ satisfies Definition \ref{smoothingdefinition} (c) with $\kappa=D[{\bf{H}},{\omega^*}]$. Statement (ii) holds as desired.

(iii) By \eqref{grad-inf-conv}, it is easy to obtain that
\begin{eqnarray*}
	\mathbb{E}_{\xi}\left[\left\|\nabla \tilde{\mathbf{H}}_{\mu}(x,\xi)\right\|^2\right] = \mathbb{E}_{\xi}\left[\left\|\nabla \omega\left(\frac{x-\hat{v}_{\mu}(x,\xi)}{\mu} \right)\right\|^2\right] \le \sigma^2.
\end{eqnarray*}
Hence by \eqref{As-b-12} statement (iii) holds.
\qed

\vskip 2mm
\noindent \textbf{Proof of Lemma \ref{lem3.21}}

(i) Since $\tilde h_{i,\mu}$ is a smoothing function of $h_i(x)$ for each $i\in \mathbb{I}_q$, 
by slightly modifying the proof of Proposition 4.1 \cite{beck2012smoothing}, we can easily deduce that the smoothing approximation $\tilde{h}_{\mu}$ satisfies the conditions in Definition \ref{smoothingdefinition} (a) and  (b).

Denote ${\tilde{z}}_{\mu}(x)=({\tilde h}_{1,\mu}(x), \ldots,{\tilde h}_{q,\mu}(x))^{T}$ and $\mathbf{J}_{\tilde{z}_{\mu}}(x)=(\nabla{\tilde h}_{1,\mu}(x), \ldots,\nabla{\tilde h}_{q,\mu}(x))$. By the definition of $\tilde b_{\mu}(z)$ in \eqref{bt34}, we have 
\begin{eqnarray*}
	\left|\tilde{h}_{\mu_1}(x) - \tilde{h}_{\mu_2}(x)\right|
	&=&  \left|\tilde b_{\mu_1}\left(\tilde z_{\mu_1}(x)\right) - \tilde b_{\mu_2} \left(\tilde z_{\mu_2}(x)\right)\right|\\
	&\leq& \left|\tilde b_{\mu_1}\left(\tilde z_{\mu_1} (x)\right) - \tilde b_{\mu_1} \left(\tilde z_{\mu_2}(x)\right)\right| +\left| \tilde b_{\mu_1} \left(\tilde z_{\mu_2}(x)\right) - \tilde b_{\mu_2}\left(\tilde z_{\mu_2}(x)\right)\right|\\
	&=& \left|\left\langle \nabla\tilde b_{\mu_1}\left(\hat z\right),\tilde z_{\mu_1}(x)-\tilde z_{\mu_2}(x)\right\rangle \right| +\left| \tilde b_{\mu_1} \left(\tilde z_{\mu_2}(x)\right) - \tilde b_{\mu_2}\left(\tilde z_{\mu_2}(x)\right)\right|\\
	&\leq& \left| \left\langle \frac{\left( e^{{\hat z}_1 / \mu},\ldots,e^{{\hat z}_q / \mu}\right)^T}{\sum_{i=1}^q e^{{\hat z}_i /\mu}},
	\tilde z_{\mu_1}(x)-\tilde z_{\mu_2}(x)\right\rangle \right| +\omega(0) \left|\mu_1 - \mu_2\right|\\
	&\leq& 
	\left\| \tilde z_{\mu_1}(x)-\tilde z_{\mu_2}(x)\right\| +\omega(0) \left|\mu_1 - \mu_2\right|\\
	&\leq& 
	\kappa \left|\mu_1 - \mu_2\right|,
\end{eqnarray*}
where $\hat z$ is a point on the line segment between $\tilde z_{\mu_1}(x)$ and $\tilde z_{\mu_2}(x)$, the second equality is obtained by the mean value theorem, {and the second inequality follows from (\ref{bt34}), Lemma \ref{lemma2.1} (i) and Remark 2.} Hence Definition \ref{smoothingdefinition} (c) is satisfied with $\kappa = \sqrt{\sum_{i=1}^q \kappa_i^2}+\ln q$, by the definition of $\omega$ in \eqref{b-omega}. 

{Next, we prove that Definition \ref{smoothingdefinition} (d) holds.} We have for $\forall x, y\in X$,
\begin{eqnarray*}
	&&\left\|\nabla\tilde{h}_{\mu}(x) - \nabla\tilde{h}_{\mu}(y)\right\|\\
	&&\quad=  \left\|\mathbf{J}_{\tilde{z}_{\mu}}(x)\nabla\tilde b_{\mu}(\tilde z_{\mu}(x)) - \mathbf{J}_{\tilde{z}_{\mu}}(y)\nabla\tilde b_{\mu} (\tilde z_{\mu}(y))\right\|\\
	&&\quad=  \left\|\mathbf{J}_{\tilde{z}_{\mu}}(x)\left[ \nabla\tilde b_{\mu}(\tilde z_{\mu}(x))- \nabla\tilde b_{\mu} (\tilde z_{\mu}(y))\right]
	+ \left[ \mathbf{J}_{\tilde{z}_{\mu}}(x)-\mathbf{J}_{\tilde{z}_{\mu}}(y)\right] \nabla\tilde b_{\mu} (\tilde z_{\mu}(y))\right\|\\
	&&\quad\leq  \left\| \nabla\tilde b_{\mu}(\tilde z_{\mu}(x))- \nabla\tilde b_{\mu} (\tilde z_{\mu}(y))\right\|
	\left\|\mathbf{J}_{\tilde{z}_{\mu}}(x)\right\|
	+ \left\|\nabla\tilde b_{\mu} (\tilde z_{\mu}(y))\right\|
	\left\|\mathbf{J}_{\tilde{z}_{\mu}}(x)-\mathbf{J}_{\tilde{z}_{\mu}}(y) \right\|\\
	&&\quad\leq  \frac{1}{\mu}\left\| \tilde z_{\mu}(x)-\tilde z_{\mu}(y)\right\|
	\left\|\mathbf{J}_{\tilde{z}_{\mu}}(x)\right\|
	+ \left\|\nabla\tilde b_{\mu} (\tilde z_{\mu}(y))\right\|
	\left\|\mathbf{J}_{\tilde{z}_{\mu}}(x)-\mathbf{J}_{\tilde{z}_{\mu}}(y) \right\|,
\end{eqnarray*}
where the last inequality follows from  {the fact that the Lipschitz constant of $\nabla\tilde b_{\mu}(\cdot)$ is $\frac{1}{\mu}$, according to Example 4.4 of \cite{beck2012smoothing}.} 

{Using} the definition of $M_i$, we can get that for any $x, y \in X$ and $\mu\in (0,\bar \mu]$,
\begin{eqnarray*}
	\left\| \tilde z_{\mu}(x)-\tilde z_{\mu}(y)\right\|
	&=&\sqrt{\sum_{i=1}^{q}\left|{\tilde h}_{i,\mu}(x)-{\tilde h}_{i,\mu}(y) \right|^2}
	\leq\sum_{i=1}^{q}\left|{\tilde h}_{i,\mu}(x)-{\tilde h}_{i,\mu}(y) \right|\\
	&\leq&\sum_{i=1}^{q}\max_{z\in X}\left\{\left\|\nabla{\tilde h}_{i,\mu}(z) \right\|\right\}\left\|x-y \right\|\\
	&\leq&\sum_{i=1}^{q}M_i\left\|x-y \right\|,
\end{eqnarray*}
and
\begin{eqnarray*}
	\left\|\mathbf{J}_{\tilde{z}_{\mu}}(x)\right\|
	=\sqrt{\sum_{i=1}^{q}\left\|\nabla{\tilde h}_{i,\mu}(x)\right\|^2}
	\leq\sum_{i=1}^{q}\left\|\nabla{\tilde h}_{i,\mu}(x)\right\|
	\leq\sum_{i=1}^{q}M_i.
\end{eqnarray*}
In view of \eqref{bt34}, we have 
\begin{eqnarray*}
	\left\|\nabla\tilde b_{\mu} (\tilde z_{\mu}(y))\right\|
	&=&\sqrt{\left[ \sum_{i=1}^q\left( e^{\frac{{\tilde h}_{i,\mu}(y)}{\mu}}\right)^2\right]  \left/ \left(\sum_{j=1}^q e^{\frac{{\tilde h}_{j,\mu}(y)}{\mu}}\right)^2\right.}
	\leq 1,
\end{eqnarray*}
and
\begin{eqnarray*}
	\left\|\mathbf{J}_{\tilde{z}_{\mu}}(x)-\mathbf{J}_{\tilde{z}_{\mu}}(y) \right\|
	&=&\sqrt{\sum_{i=1}^{q}\left\|\nabla{\tilde h}_{i,\mu}(x)-\nabla{\tilde h}_{i,\mu}(y) \right\|^2}
	\leq \sum_{i=1}^{q}\left\|\nabla{\tilde h}_{i,\mu}(x)-\nabla{\tilde h}_{i,\mu}(y) \right\|\\
	&\leq&\sum_{i=1}^{q}\left( K_i + \frac{{\cal{L}}_{h_i}}{\mu}\right) \left\|x-y \right\|.
\end{eqnarray*}
By the above inequalities, we have 
\begin{eqnarray*}
	\left\|\nabla\tilde{h}_{\mu}(x) - \nabla\tilde{h}_{\mu}(y)\right\|
	\leq  \left( \frac{\left( \sum_{i=1}^{q}M_i\right)^2
		+\sum_{i=1}^{q}{\cal{L}}_{h_i}}{\mu}
	+
	\sum_{i=1}^{q}K_i\right) \left\|x-y \right\|.
\end{eqnarray*}
Hence Definition \ref{smoothingdefinition} (d) is satisfied with $K = \sum_{i=1}^{q}K_i$, and ${\cal L}_h = \left( \sum_{i=1}^{q}M_i\right)^2
+\sum_{i=1}^{q}{\cal{L}}_{h_i}$. Till now we have shown that statement (i) of this lemma holds.

(ii) Using the similar arguments as \eqref{As-b-12} and $\sigma^2 = \mathbb{E}_{i}\left[\left(M_i\right)^2\right]$, we have statement (ii) holds because
\begin{eqnarray}\label{infii}
	\mathbb{E}_{i}\left[\left\|\nabla \tilde{\mathbf{\Psi}}_{\mu}(x,i) - \nabla \tilde{\psi}_{\mu}(x)\right\|^2\right]
	&=&\sum_{i=1}^q p_{x,\mu}(i)\left\|\nabla f(x)+\nabla \tilde{h}_{i,\mu}(x)-\nabla\tilde{\psi}_{\mu}(x)\right\|^2\nonumber\\
	&=&\sum_{i=1}^q p_{x,\mu}(i)\left\|\nabla \tilde{h}_{i,\mu}(x)-\nabla\tilde{h}_{\mu}(x)\right\|^2\nonumber\\
	&\leq& \mathbb{E}_{i}\left[ \left\|\nabla \tilde{h}_{i,\mu}(x)\right\|^2\right] 
	\leq\sigma^2.
\end{eqnarray}
\qed

\noindent \textbf{Proof of Lemma \ref{lem3.211}}

(i) {Since $\tilde h_{i,\mu}(\cdot,\xi)$ is a smoothing function of $h_i(\cdot,\xi)$} for each $i\in \mathbb{I}_q$, a.e. $\xi\in\Xi$, and any $\mu\in(0,\bar{\mu}]$, by slightly modifying the {proofs} of Lemmas \ref{lemma2.1} and \ref{lem3.21}, we can easily deduce that the smoothing approximations $\tilde{h}_{\mu}$ satisfies  Definition \ref{smoothingdefinition} (a) and (b).
Denote
$${\tilde{z}}_{\mu}(x,\xi) = \left({\tilde h}_{1,\mu}(x,\xi), \ldots,{\tilde h}_{q,\mu}(x,\xi)\right)^{T}, \quad \mathbf{J}_{\tilde{z}_{\mu}}(x,\xi) = \left(\nabla{\tilde h}_{1,\mu}(x,\xi), \ldots,\nabla{\tilde h}_{q,\mu}(x,\xi)\right).$$ 
Using the similar arguments in Lemma \ref{lem3.21}, we can show that  
Definition \ref{smoothingdefinition} (c) and (d) are satisfied with the parameters $\kappa$, $K$, and $\mathcal{L}_h$, as defined in statement (i).

(ii) Using the similar arguments as \eqref{infii} as well as  $\sigma^2= \mathbb{E}_{\xi,i}\left[ {\left(M_{i,\xi}\right)^2}\right]$, we have statement (ii) holds.
\qed

\end{appendices}


\bibliography{references}

\end{document}